\documentclass[11pt,reqno]{amsart}

\usepackage{amsmath,amssymb,amsfonts}
\usepackage{amsthm}
\usepackage{mathrsfs}
\usepackage[T1]{fontenc}
\usepackage[a4paper,margin=1in]{geometry}
\usepackage{tikz}
\usetikzlibrary{positioning}

\theoremstyle{plain}
\newtheorem{theorem}{Theorem}[section]
\newtheorem{lemma}[theorem]{Lemma}
\newtheorem{proposition}[theorem]{Proposition}

\theoremstyle{remark}
\newtheorem{remark}[theorem]{Remark}

\theoremstyle{definition}
\newtheorem{definition}[theorem]{Definition}

\DeclareMathOperator{\Aut}{Aut}
\DeclareMathOperator{\Sym}{Sym}
\DeclareMathOperator{\Stab}{Stab}

\usepackage{hyperref}

\begin{document}

\title[The minimum size of a poset realizing $\mathbb{Z}_2 \times \mathbb{Z}_4$]{The Minimum Size of a Poset Realizing $\mathbb{Z}_2 \times \mathbb{Z}_4$ as its Automorphism Group}

\author{Ponaki Das}
\address{Department of Mathematics, North Eastern Hill University, Shillong 793022, India}
\email{ponaki.das20@gmail.com}

\author{Sainkupar Marwein Mawiong}
\address{Department of Basic Science and Social Science, North Eastern Hill University, Shillong 793022, India}
\email{skupar@gmail.com}

\subjclass[2020]{06A07, 20B25}

\keywords{Finite posets, Automorphism group, Orbit decomposition, Group actions}

\begin{abstract}
For a finite group $G$, let $\beta(G)$ denote the minimum cardinality $|P|$ among finite posets $P$ whose automorphism group $\Aut(P)$ is isomorphic to $G$. While every finite group is realizable as the automorphism group of some finite poset, exact values of $\beta(G)$ are known only in special cases, most notably for cyclic groups. In this paper we prove that $\beta(\mathbb{Z}_2 \times \mathbb{Z}_4) = 14$; in particular, the product bound $\beta(G \times H) \le \beta(G) + \beta(H)$ is sharp in this case. The upper bound is realized by an explicit $14$-element poset $P_{14}$, whose automorphism group is computed by a height-function argument together with a rigidity analysis of its covering relations. The lower bound, which constitutes the substantive part of the proof, is established by a case analysis of the orbit decompositions of a hypothetical poset on at most $13$ points under a faithful $G$-action, organized according to the largest orbit size; in each case we construct an order-automorphism outside the given copy of $G$, contradicting $\Aut(P) \cong G$. Among non-cyclic groups, to our knowledge this is the first exact determination of $\beta(G)$ whose lower bound requires a structural analysis of this kind: for the other non-cyclic abelian groups of order at most $8$, namely $\mathbb{Z}_2 \times \mathbb{Z}_2$ and $\mathbb{Z}_2^3$, the value of $\beta$ is elementary. The arguments are closely adapted to the subgroup lattice of $\mathbb{Z}_2 \times \mathbb{Z}_4$.
\end{abstract}

\maketitle

\section{Introduction}\label{sec:intro}

A classical problem in algebraic combinatorics is to determine how finite groups arise as automorphism groups of combinatorial structures. For graphs, Frucht's theorem~\cite{Frucht1939} establishes that every finite group is isomorphic to the automorphism group of some finite undirected graph. The analogous question for finite posets was settled affirmatively by Birkhoff~\cite{Birkhoff1946}, who realized any finite group of order $n$ as the automorphism group of a poset on $n^2 + n$ elements; Thornton~\cite{Thornton1972} subsequently reduced the bound to $n(2r+1)$ when $G$ is $r$-generated (working in the equivalent setting of finite $T_0$-spaces), and the bound $n(r+2)$ was obtained by Frucht~\cite{Frucht1950} and, independently, by Barmak and Minian~\cite{BarmakMinian2009}. Babai~\cite{Babai1980} removed the dependence on the generator count, showing that $3n$ points always suffice; Barmak~\cite{Barmak2023} later gave a short, self-contained construction attaining the bound $4n$.

Once realizability is established, a natural quantitative refinement is to ask for the smallest realizing structure. Throughout this paper all posets are assumed to be finite and \emph{non-empty}; without the latter convention the empty poset would give the value $0$ for the trivial group, whereas with it the trivial group is realized minimally by a single point. (The convention is also used in the proof of Proposition~\ref{prop:product-bound}.) For a finite group $G$, define
\[
\beta(G) := \min\{\,|P| : P \text{ is a finite poset with } \Aut(P) \cong G\,\}.
\]
The set on the right is non-empty by Birkhoff's theorem~\cite{Birkhoff1946}, so $\beta(G)$ is a well-defined positive integer. Computing $\beta(G)$ amounts to proving matching upper and lower bounds: an explicit poset realizing $G$ with $\beta(G)$ elements, and a structural argument excluding all smaller posets. While realizability is settled in considerable generality, exact values of $\beta(G)$ are known only in special cases, most notably for cyclic groups: Barmak and Barreto~\cite{BarmakBarreto2024} determined $\beta(\mathbb{Z}_n)$ for every $n \ge 1$ (see Proposition~\ref{prop:cyclic} below for the precise statement), obtaining in particular
\[
\beta(\mathbb{Z}_2) = 2 \quad \text{and} \quad \beta(\mathbb{Z}_4) = 12.
\]
The same values of $\beta(\mathbb{Z}_n)$ were obtained independently by Gyenizse, Hajnal, and Z\'adori~\cite{GyenizseHajnalZadori2024}, who in addition gave height-one poset representations with at most four orbits for every finite group.
In addition, the thesis of Barreto~\cite{Barreto2021} computes $\beta(G)$ for finite cyclic groups and for finite abelian $p$-groups with $p \ge 11$ (see the discussion in \cite[\S1]{BarmakBarreto2024}); the general abelian case remains open.

For direct products, the general bound
\[
\beta(G \times H) \le \beta(G) + \beta(H)
\]
(Proposition~\ref{prop:product-bound}) yields $\beta(\mathbb{Z}_2 \times \mathbb{Z}_4) \le 14$. Establishing equality, however, requires a separate lower-bound argument; indeed, the product bound is not sharp in general, even for cyclic factors: $\beta(\mathbb{Z}_{12}) = \beta(\mathbb{Z}_3) + \beta(\mathbb{Z}_4) - 1$ by~\cite{BarmakBarreto2024}.

The main result of this paper is that the product bound is sharp for $\mathbb{Z}_2 \times \mathbb{Z}_4$:

\begin{theorem}\label{thm:intro-main}
$\beta(\mathbb{Z}_2 \times \mathbb{Z}_4) = 14$.
\end{theorem}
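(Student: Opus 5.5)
The proof splits into an upper bound and a lower bound. For the upper bound $\beta(\mathbb{Z}_2\times\mathbb{Z}_4)\le 14$, I invoke Proposition~\ref{prop:product-bound} together with $\beta(\mathbb{Z}_2)=2$ and $\beta(\mathbb{Z}_4)=12$ from Proposition~\ref{prop:cyclic}. To have a concrete object for later reference, I also write down the explicit poset $P_{14}$. It consists of a $2$-element antichain realizing $\mathbb{Z}_2$, placed disjointly (or in ordinal sum, whichever the product construction uses) with the $12$-element Barmak--Barreto poset for $\mathbb{Z}_4$. I then verify $\Aut(P_{14})\cong\mathbb{Z}_2\times\mathbb{Z}_4$ directly. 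A height function shows that every automorphism preserves levels. The components are non-isomorphic, so no automorphism mixes them. Inside the $\mathbb{Z}_4$-part, rigidity of the covering relations forces the automorphism to be a rotation.

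For the lower bound, I suppose $P$ is a poset with $|P|\le 13$ and $\Aut(P)\cong G=\mathbb{Z}_2\times\mathbb{Z}_4$, and consider the faithful action of $G$ on $P$. Orbit sizes lie in $\{1,2,4,8\}$, and the orbit stabilizers are among the subgroups of $G$. The organizing principle is the following. By Proposition~\ref{prop:cyclic}, any faithful action of $\mathbb{Z}_4$ on a poset that is the full automorphism group needs $12$ points. I therefore want to show that the elements of order $4$ must act "as in a minimal $\mathbb{Z}_4$-poset" on some sub-configuration, and that the extra $\mathbb{Z}_2$ factor costs at least two more points. I split into cases according to the largest orbit size $m\in\{8,4,2\}$; $m=1$ is impossible by faithfulness.

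\textbf{Case $m=8$ (a regular orbit $O$).} At most $5$ points lie outside $O$. The comparabilities between $O$ and the rest, and within $O$, are $G$-invariant relations on $G$, so they are unions of cosets/translates. A regular orbit is an antichain, since comparabilities within an orbit would generate a chain-cycle. The order between $O$ and a smaller orbit $O'$ is then determined by a $G$-invariant bipartite relation. With so few extra points, this relation factors through a quotient $G/K$ small enough that it is invariant under some extra permutation of $O$. One candidate is inversion $g\mapsto g^{-1}$ composed with a suitable map. Another is a non-identity permutation of $O$ that fixes every $K$-coset setwise while preserving all relations. Either yields an automorphism outside $G$.

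\textbf{Case $m=4$.} The orbits of size $4$ have stabilizers of order $2$. Each of these is one of the three subgroups $\langle(1,0)\rangle$, $\langle(0,2)\rangle$, $\langle(1,2)\rangle$, and the quotient is $\mathbb{Z}_4$ or $\mathbb{Z}_2^2$. Faithfulness forces the intersection of all stabilizers to be trivial, so at least two different stabilizer types occur. I count: to make the $\mathbb{Z}_4$-quotient act with full group $\mathbb{Z}_4$ rather than $D_4$, I emulate the Barmak--Barreto lower-bound argument, which shows this already requires about $12$ points. The second stabilizer type then adds orbits that push the total to at least $14$. Otherwise I exhibit a reflection-type automorphism, meaning an extension of $g\mapsto g^{-1}$ acting on each orbit, which is order-preserving whenever the bipartite relations between orbits are "symmetric". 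I will need the fact that for $\mathbb{Z}_4$-orbits of size $4$ with few connecting orbits, every invariant relation is inversion-symmetric.

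\textbf{Case $m=2$.} The group acts through an elementary abelian quotient on every orbit, so $(0,2)$ acts trivially, contradicting faithfulness.

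The main obstacle is Case $m=4$, specifically reproducing the $\mathbb{Z}_4$ lower-bound mechanism inside a poset that also carries the extra $\mathbb{Z}_2$. My first attempt will be to quotient by, or restrict to, the union of orbits on which $(0,2)$ acts nontrivially and apply Proposition~\ref{prop:cyclic}-style reasoning there. I would then argue that the orbits needed to kill the remaining $\mathbb{Z}_2$ symmetry, such as swapping orbits or the inversion, cannot be shared with that sub-configuration without creating an extra automorphism.
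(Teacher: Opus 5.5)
Your upper bound and your cases $m=2$ and $m=8$ are sound in outline. For $m=8$, make one point explicit: the orbits outside $O$ contain at most $5$ points, so their kernels share a non-identity element $h$. This follows from (K2) if none of them is a $4$-orbit, and $h$ is the generator of $K_D$ if one of them is a $4$-orbit $D$. The transposition $c\leftrightarrow hc$ on $O$ is then the extra automorphism, as in Lemma~\ref{lem:8orbit}.

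The genuine gap is the case $m=4$, which carries the whole weight of the lower bound and for which you give a plan, not an argument. The counting route fails as described. Let $Q$ be the union of the orbits on which $\rho^2$ (your $(0,2)$) acts non-trivially. There is no reason to have $\Aut(Q)\cong\mathbb{Z}_4$: surplus automorphisms of $Q$ can be destroyed by comparabilities with the remaining orbits, which at the same time supply faithfulness. So $\beta(\mathbb{Z}_4)=12$ gives no bound on $|Q|$. Configurations such as $4+4+2+2+1$ (only $8$ points carry the $\mathbb{Z}_4$-action) or $4+4+4+1$ (the second kernel type already lies among the $12$ points) survive every count. They can only be excluded by exhibiting an extra automorphism.

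Your fallback, as you describe it, does not supply one in general. An extension of $g\mapsto g^{-1}$ to a cyclic-type orbit $C$ is, in a $\rho$-equivariant labelling, $c_i\mapsto c_{2k-i}$ for a base point $c_k$. That is a reflection $x\mapsto t-x$ with $t$ \emph{even}. Now let $C,D$ be cyclic orbits with equal kernel and $c_i<d_j\iff j-i\in\{a,a+1\}$. The pair $(x\mapsto t_C-x,\ y\mapsto t_D-y)$ preserves this relation only when $t_D-t_C$ is odd (Lemma~\ref{lem:pair-symmetry}(iv)). So the relation is inversion-symmetric for no choice of base points, and the ``fact'' you intend to use is false.

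An odd reflection on one of the two orbits is therefore forced. It swaps that orbit's canonical classes $\{c_0,c_2\}$ and $\{c_1,c_3\}$. That is illegal if some $2$-orbit with stabilizer $\langle\sigma,\rho^2\rangle$ has a neighbourhood in that orbit equal to a single class. The missing content is that of Theorem~\ref{thm:strategy2}:
\begin{itemize}
\item for $|P|\le 13$, faithfulness leaves room for at most one such $2$-orbit (Lemma~\ref{lem:faithful-exclusion}(ii));
\item transitivity through an adjacent difference set prevents that orbit from constraining both $C$ and $D$ (Lemma~\ref{lem:mutex}), so one of $t_C,t_D$ can be taken odd.
\end{itemize}

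The other configurations are a single $4$-orbit, or one that is uniquely kerneled. They are easier but still need an argument, and uniform inversion on every orbit can fail there for the same adjacency reason. The paper instead acts on one orbit only, using either of two maps:
\begin{itemize}
\item the transposition $(c_1\,c_3)$, which is your inversion about $c_0$ applied to that orbit alone;
\item in the Klein-type case, where inversion acts trivially, a transposition inside one canonical class.
\end{itemize}
Both are legitimate because every cross-neighbourhood is a union of canonical classes (Lemmas~\ref{lem:stab-enum} and~\ref{lem:cross-kernel}, Lemma~\ref{lem:hiddeninv}, Theorem~\ref{thm:strategy1}).
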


To place Theorem~\ref{thm:intro-main} in context, we record the values of $\beta$ for the remaining non-cyclic abelian groups of order at most $8$, namely $\mathbb{Z}_2 \times \mathbb{Z}_2$ and $\mathbb{Z}_2^3$. In contrast to Theorem~\ref{thm:intro-main}, both values follow from a short counting argument.

\begin{proposition}\label{prop:elementary}
$\beta(\mathbb{Z}_2 \times \mathbb{Z}_2) = 4$ and $\beta(\mathbb{Z}_2^3) = 6$.
\end{proposition}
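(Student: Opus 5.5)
For both groups I will prove the upper bound with the product bound (Proposition~\ref{prop:product-bound}) and $\beta(\mathbb{Z}_2)=2$, and the lower bound with a counting argument on orbits.

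\textbf{Upper bounds.} Proposition~\ref{prop:product-bound} together with $\beta(\mathbb{Z}_2)=2$ gives $\beta(\mathbb{Z}_2\times\mathbb{Z}_2)\le 4$. Applying it once more gives $\beta(\mathbb{Z}_2^3)\le \beta(\mathbb{Z}_2^2)+\beta(\mathbb{Z}_2)\le 6$. Concretely, the posets are disjoint unions of pairwise non-isomorphic rigid-up-to-swap components. An example is a $2$-antichain together with a $2$-chain-like gadget, which is exactly what the proof of the product bound produces. So I only need to cite it.

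\textbf{Lower bounds.} Let $P$ be a poset with $\Aut(P)\cong G$, where $G$ is elementary abelian of order $2^k$. Then $G$ acts faithfully on $P$. Every orbit has size $1$ or $2$, since it is $G/\Stab$ and every nontrivial quotient of $G$ has exponent $2$ and is the image of a transitive action.

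The key observation concerns an orbit $\{a,b\}$ of size $2$. The points $a$ and $b$ are incomparable: if $a<b$, then applying the swapping element $g$ gives $b<a$. Now consider the transposition $\tau$ that swaps $a$ and $b$ and fixes everything else. I claim $\tau$ is an automorphism. For $x$ outside $\{a,b\}$ we must show $x<a\iff x<b$. This fails in general, because $g$ also moves $x$. So I need more care here.

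Instead I will count subgroups. Write the orbits of size $2$ as $O_1,\dots,O_m$. Each $O_i$ determines a homomorphism $\chi_i\colon G\to\mathbb{Z}_2$ recording whether an element swaps $O_i$. Faithfulness says the $\chi_i$ have trivial common kernel, so they span $\mathrm{Hom}(G,\mathbb{Z}_2)\cong\mathbb{Z}_2^k$. Hence $m\ge k$, which gives $|P|\ge 2k$.

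This yields $|P|\ge 4$ for $k=2$ and $|P|\ge 6$ for $k=3$, which is exactly the lower bound needed. Thus no analysis of automorphisms outside $G$ is required, and the lower bound is pure counting. The point to verify carefully is that orbit sizes of $2$-groups of exponent $2$ are powers of $2$ bounded by $2$. This holds because an orbit is isomorphic as a $G$-set to $G/H$, and $G$ acts on it through the quotient $G/H$, which acts regularly. Faithfulness of $G/H$ on a set of size $|G/H|$ does not restrict the size, so orbits of size $4$ are possible.

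This is the main obstacle. Orbits of size $4$ or $8$ can occur, so the claim ``orbit sizes are at most $2$'' is false. I therefore replace the bound with the following: an orbit $G/H$ contributes $|G/H|\ge 2^{\operatorname{codim}H}$ points, and the $H$'s must intersect trivially. Since $2^c\ge 2c$ for all $c\ge 1$, the total is still at least $2\sum c_i\ge 2k$, because trivial intersection forces $\sum \operatorname{codim}H_i\ge k$. This gives $\beta\ge 2k$ in both cases and completes the proof.
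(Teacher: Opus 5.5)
Your final argument is correct, but it follows a different route from the paper's. The paper treats $\mathbb{Z}_2\times\mathbb{Z}_2$ by divisibility ($4\nmid 3!$). For $\mathbb{Z}_2^3$ it assumes $|P|\le 5$ and splits into cases according to which orbit sizes in $\{1,2,4\}$ occur. In each case it uses $|H_1\cap H_2|\ge |H_1|\,|H_2|/|G|$ to show that the kernels $K_E$ cannot intersect trivially. You instead prove one inequality,
\[
|P| \;=\; \sum_E 2^{c_E} \;\ge\; 2\sum_E c_E \;\ge\; 2k, \qquad c_E := \operatorname{codim} K_E ,
\]
with no case split. It gives $\beta(\mathbb{Z}_2^k)\ge 2k$ for every $k$, and with the iterated product bound, $\beta(\mathbb{Z}_2^k)=2k$ in general. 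So your route buys a uniform statement at no extra cost, whereas the paper's only covers $k=2,3$, though with fully explicit bookkeeping. Like the paper's, your lower bound uses only faithfulness of the action, never the order relation.

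The step $\sum_E c_E\ge k$ needs its one-line justification. Since $G$ is abelian, $K_E$ is the kernel of $G\to\Sym(E)$, so faithfulness embeds $G$ in $\prod_E G/K_E$, giving $2^k\le\prod_E 2^{c_E}$. Equivalently, codimension is subadditive, which is just the iterated logarithmic form of the paper's intersection bound. Note also that $|E|=2^{c_E}$ holds with equality, and fixed points ($c_E=0$) are covered by $1\ge 0$.

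When you write this up, delete the abandoned material rather than keeping it as narrative. The claim that every orbit has size $1$ or $2$ is false, as you eventually notice. The character count giving $m\ge k$ rests on that claim, so it goes too, along with the aborted transposition argument.

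Your ``concrete'' description of the upper-bound posets is also wrong. The product bound uses an ordinal sum, here of two (resp.\ three) $2$-element antichains, not a disjoint union. A $2$-antichain placed disjointly beside a $2$-chain has automorphism group $\mathbb{Z}_2$, not $\mathbb{Z}_2^2$. Since citing Proposition~\ref{prop:product-bound} with $\beta(\mathbb{Z}_2)=2$ suffices, just drop that sentence.
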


\begin{proof}
For the upper bounds, the ordinal sums of two, respectively three, $2$-element antichains have automorphism groups $S_2 \times S_2 \cong \mathbb{Z}_2 \times \mathbb{Z}_2$ and $S_2 \times S_2 \times S_2 \cong \mathbb{Z}_2^3$, with $4$ and $6$ points; this is (iterated) Proposition~\ref{prop:product-bound} together with $\beta(\mathbb{Z}_2) = 2$. (Alternative height-one realizers of the same sizes are given by Gyenizse, Hajnal, and Z\'adori~\cite[Cor.~2.2]{GyenizseHajnalZadori2024}: the $4$-element crown for $\mathbb{Z}_2 \times \mathbb{Z}_2$, and its disjoint sum with a $2$-element antichain for $\mathbb{Z}_2^3$.)

For the lower bounds, suppose $\Aut(P) \cong \mathbb{Z}_2^k$ with $k \in \{2,3\}$, so that $G = \mathbb{Z}_2^k$ acts faithfully on $P$ and $|\Aut(P)| = 2^k$ divides $|P|!$. If $k = 2$ and $|P| \le 3$, then $|P|! \in \{1, 2, 6\}$ is not divisible by $4 = |G|$, a contradiction; hence $\beta(\mathbb{Z}_2 \times \mathbb{Z}_2) \ge 4$. For $k = 3$ the divisibility bound only gives $|P| \ge 4$ (as $8 \mid 4!$), so we argue through orbits. Suppose $|P| \le 5$. Each $G$-orbit $E$ has $|E| \in \{1,2,4\}$ (dividing $|G| = 8$); since $G$ is abelian, all points of $E$ have the same stabilizer $K_E$, which is the kernel of the action $G \to \Sym(E)$ and has order $|G|/|E| = 8/|E|$. Faithfulness means $\bigcap_E K_E = \{e\}$, the intersection over all orbits. We use the elementary bound $|H_1 \cap H_2| \ge |H_1|\,|H_2|/|G|$ for subgroups of $G$: since $G$ is abelian, $H_1 H_2$ is a subgroup of $G$ and so $|H_1 H_2| \le |G|$, while the product formula $|H_1 H_2| = |H_1|\,|H_2|/|H_1 \cap H_2|$ then yields the bound. Writing $|P| \le 5$ as a sum of orbit sizes from $\{1,2,4\}$, the parts exceeding $1$ are each at least $2$ and sum to at most $5$, so at most two such parts can appear (since three size-$2$ orbits would force $|P| \ge 6$). There are three cases. If some orbit has size $4$, then $|K_E| = 2$ and any remaining point is fixed (with stabilizer $G$), so $\bigcap_E K_E = K_E \ne \{e\}$. If no orbit has size $4$ but some orbit has size $2$, then at most two orbits have size $2$: a single one gives $\bigcap_E K_E \supseteq K_E$ of order $4$, while two of them, say with kernels $K_1, K_2$ of order $4$, give $|K_1 \cap K_2| \ge 4 \cdot 4 / 8 = 2$, so $\bigcap_E K_E$ has order at least $2$. If no orbit has size $2$ or $4$, then $P$ is a union of fixed points and $G$ acts trivially. In every case $\bigcap_E K_E \ne \{e\}$, contradicting faithfulness; hence $\beta(\mathbb{Z}_2^3) \ge 6$.
\end{proof}

By contrast, the lower bound $\beta(\mathbb{Z}_2 \times \mathbb{Z}_4) \ge 14$ does not, to our knowledge, admit a comparably short argument: it is established here through a structural analysis of all possible orbit decompositions of a hypothetical realizer on at most $13$ points. In this sense, to our knowledge, Theorem~\ref{thm:intro-main} is the first exact determination of $\beta(G)$ for a non-cyclic group in which the lower bound requires a non-trivial structural analysis. We emphasize that the result is not contained in~\cite{Barreto2021}, which treats cyclic groups and abelian $p$-groups with $p \ge 11$ only; nor in~\cite{GyenizseHajnalZadori2024}, whose exact-minimum results concern cyclic groups, the realizations of other groups there (including the non-cyclic $\mathbb{Z}_3^2$ and the quaternion group) being constructions that bound $\beta$ from above without determining it. Consistent with this, the exceptional constants at the prime powers $2,3,4,5,7$ in Proposition~\ref{prop:cyclic} already indicate that it is precisely the small primes that resist the general techniques.

\subsection*{Notation and conventions}

Throughout, $G = \mathbb{Z}_2 \times \mathbb{Z}_4$ with fixed generators $\sigma$ (of order $2$) and $\rho$ (of order $4$). The elements of $G$ are partitioned by order as follows:
\[
\begin{aligned}
& e \quad &&(\text{order } 1), \\
& \sigma,\ \rho^2,\ \sigma\rho^2 \quad &&(\text{order } 2), \\
& \rho,\ \rho^3,\ \sigma\rho,\ \sigma\rho^3 \quad &&(\text{order } 4).
\end{aligned}
\]
The proper non-trivial subgroups of $G$ are listed in Table~\ref{tab:subgroups}; in particular, $G$ has \emph{three} subgroups of order $4$ (two cyclic and one Klein four-group), a structural feature that underlies the subsequent case analysis. Two facts about this subgroup lattice are used repeatedly and recorded here for reference:
\begin{enumerate}
\item[(K1)]\label{K1} the three order-$2$ subgroups $\langle\sigma\rangle$, $\langle\rho^2\rangle$, $\langle\sigma\rho^2\rangle$ intersect pairwise trivially;
\item[(K2)]\label{K2} every order-$4$ subgroup of $G$ contains $\langle\rho^2\rangle$.
\end{enumerate}

\begin{table}[h]
\centering
\renewcommand{\arraystretch}{1.2}
\begin{tabular}{|c|l|l|l|}
\hline
\textbf{Order} & \textbf{Subgroup} & \textbf{Elements} & \textbf{Type} \\ \hline
$2$ & $\langle \sigma \rangle$ & $\{e, \sigma\}$ & \\
$2$ & $\langle \rho^2 \rangle$ & $\{e, \rho^2\}$ & \\
$2$ & $\langle \sigma\rho^2 \rangle$ & $\{e, \sigma\rho^2\}$ & \\ \hline
$4$ & $\langle \rho \rangle$ & $\{e, \rho, \rho^2, \rho^3\}$ & cyclic \\
$4$ & $\langle \sigma\rho \rangle$ & $\{e, \sigma\rho, \rho^2, \sigma\rho^3\}$ & cyclic \\
$4$ & $\langle \sigma, \rho^2 \rangle$ & $\{e, \sigma, \rho^2, \sigma\rho^2\}$ & Klein \\ \hline
\end{tabular}
\caption{Proper non-trivial subgroups of $G = \mathbb{Z}_2 \times \mathbb{Z}_4$, with properties~\hyperref[K1]{(K1)} and~\hyperref[K2]{(K2)} noted above. The ``Type'' column records the cyclic/Klein classification of the order-$4$ subgroups used in Definition~\ref{def:canonical-partition}; it is left blank for the order-$2$ subgroups, to which it does not apply.}
\label{tab:subgroups}
\end{table}

By the orbit--stabilizer theorem, every $G$-orbit in $P$ has size dividing $|G| = 8$, hence size in $\{1, 2, 4, 8\}$. We classify $G$-actions on $P$ into four types according to the largest orbit size:
\emph{Type~A}: at least one orbit of size $8$;
\emph{Type~B}: no orbit of size $8$ and exactly one orbit of size $4$;
\emph{Type~C}: no orbit of size $8$ and at least two orbits of size $4$;
\emph{Type~D}: all orbit sizes lie in $\{1,2\}$.
These types are mutually exclusive and exhaustive: an action either has an orbit of size $8$ (Type~A) or it does not, and in the latter case the largest orbit size lies in $\{1,2,4\}$; if it is at most $2$ the action is of Type~D, while if it equals $4$ the action is of Type~B or~C according as the number of $4$-orbits is exactly one or at least two.

\subsection*{Methodology}

The upper bound is achieved by an explicit $14$-element poset $P_{14}$ (Definition~\ref{def:p14}), for which we verify that $\Aut(P_{14}) \cong \mathbb{Z}_2 \times \mathbb{Z}_4$ via a height-function argument and rigidity of the covering relations.

The lower bound constitutes the substantive content. Suppose for contradiction that $P$ is a finite poset with $|P| \le 13$ and $\Aut(P) \cong \mathbb{Z}_2 \times \mathbb{Z}_4$. Fixing such an isomorphism and composing with it, we obtain a faithful action of $G = \mathbb{Z}_2 \times \mathbb{Z}_4$ on $P$ by order-automorphisms; this action realizes $G$, which has order $8$, as a subgroup of $\Aut(P) \subseteq \Sym(P)$. Throughout the lower-bound analysis we treat $G$ as this fixed subgroup of $\Aut(P)$, and we write $\Stab_G(x)$, orbits, and kernels with respect to this action.

We classify the $G$-orbit decomposition of $P$ according to the four types defined above. In each type we construct an order-automorphism $\phi \in \Aut(P)$ with $\phi \notin G$ (equivalently, $\Aut(P) \supsetneq G$ as subgroups of $\Sym(P)$). The underlying strategy, permuting the elements of a single orbit while fixing the rest and verifying that the result extends to an automorphism, follows the orbit-wise extension technique of Barmak and Barreto~\cite{BarmakBarreto2024} (see also~\cite{Barmak2023}). Such a $\phi$ forces $|\Aut(P)| \ge 9 > 8 = |G|$, so $\Aut(P)$ cannot be isomorphic to $G$, contradicting the assumption. The most delicate type is~C (no $8$-orbit and at least two $4$-orbits), which we further split via a structural dichotomy: either a uniquely-kerneled $4$-orbit exists (handled by Theorem~\ref{thm:strategy1}, which further sub-divides according to whether the uniquely-kerneled orbit is of cyclic or Klein type, the Klein case requiring the configuration analysis of Lemma~\ref{lem:klein-unique-config}), or exactly two $4$-orbits exist and share a common kernel (handled by Theorem~\ref{thm:strategy2}). The latter is treated by a parity-and-adjacency analysis of the comparability pattern between the two $4$-orbits.

\subsection*{Organization}

Section~\ref{sec:prelim} establishes basic equivariance lemmas, the canonical partition of a $4$-orbit, and the stabilizer enumeration (Lemma~\ref{lem:stab-enum}) used throughout. Section~\ref{sec:smalltypes} disposes of Types~D, A, and~B, and Section~\ref{sec:typeC} carries out the Type~C analysis. Section~\ref{sec:lower} assembles the lower bound, and Section~\ref{sec:construction} presents the construction $P_{14}$, determines its automorphism group, and completes the proof of Theorem~\ref{thm:intro-main}. Section~\ref{sec:remarks} concludes with open problems.

\section{Preliminaries}\label{sec:prelim}

For elements $x, y$ of a poset $P$, we write $x \prec y$ (read ``$y$ covers $x$'') to mean $x < y$ and there is no $z$ with $x < z < y$.

For an element $x$ of a finite poset $P$, the \emph{height} $h(x)$ is the length (number of covering edges) of a longest chain whose top element is $x$, and the \emph{height of $P$} is $\mathrm{ht}(P) := \max_{x \in P} h(x)$. Since an order-isomorphism carries chains to chains of equal length, every automorphism of $P$ preserves $h$, and therefore preserves each height level $h^{-1}(\ell)$ setwise.

\begin{proposition}\label{prop:product-bound}
For finite groups $G$ and $H$,
\[
\beta(G \times H) \le \beta(G) + \beta(H).
\]
\end{proposition}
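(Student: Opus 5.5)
The plan is to realize $G\times H$ by an ordinal sum. Choose finite non-empty posets $P$ and $Q$ with $\Aut(P)\cong G$, $\Aut(Q)\cong H$, $|P|=\beta(G)$ and $|Q|=\beta(H)$. Form the ordinal sum $P\oplus Q$ on the disjoint union $P\sqcup Q$: the order is the given order within $P$ and within $Q$, together with $p<q$ for every $p\in P$ and $q\in Q$. This poset has $\beta(G)+\beta(H)$ elements, so it suffices to prove $\Aut(P\oplus Q)\cong \Aut(P)\times\Aut(Q)$.

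First, there is an injective homomorphism $\Aut(P)\times\Aut(Q)\to\Aut(P\oplus Q)$, $(\alpha,\beta)\mapsto\alpha\sqcup\beta$. This map is an order-automorphism because the cross relations $p<q$ are preserved whenever $P$ and $Q$ are each mapped to themselves. It remains to show surjectivity, i.e.\ that every $\phi\in\Aut(P\oplus Q)$ satisfies $\phi(P)=P$. Then $\phi(Q)=Q$, and $\phi$ is the image of $(\phi|_P,\phi|_Q)$.

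This surjectivity is the only real step. To prove it, consider the incomparability graph $\Gamma$ of $P\oplus Q$, whose vertices are the points and whose edges join distinct incomparable points. Any $\phi\in\Aut(P\oplus Q)$ preserves comparability, so it is a graph automorphism of $\Gamma$ and permutes the connected components of $\Gamma$.

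Next we show these components are linearly ordered. If $C\neq C'$ are components, then every element of $C$ is comparable to every element of $C'$. Moreover, either $C<C'$ entirely or $C'<C$ entirely. To see this, suppose $c_1<c'$ and $c'<c_2$ with $c_1,c_2\in C$ and $c'\in C'$. Walk along a path in $\Gamma$ from $c_1$ to $c_2$. Each vertex of the path is comparable to $c'$, so at some consecutive pair $x,y$ we have $x<c'<y$. This forces $x<y$, contradicting that $x$ and $y$ are incomparable.

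Thus the components form a finite chain $C_1<\dots<C_m$. Since $\phi$ preserves this order, it fixes each $C_i$ setwise, because an order-automorphism of a finite chain is the identity. Every $p\in P$ is comparable to every $q\in Q$, so no edge of $\Gamma$ joins $P$ to $Q$. Hence each component lies entirely in $P$ or entirely in $Q$, and $P$ is a union of components. Therefore $\phi(P)=P$.

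Non-emptiness is needed so that $\beta(G)$ and $\beta(H)$ are attained by genuine posets; it is where the paper's convention enters. The main care point is the chain-of-components argument above; the rest is routine verification.
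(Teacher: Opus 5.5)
Your proof is correct, but it separates the summands differently from the paper. The paper uses the height function. Every element of $P_G$ has height at most $\mathrm{ht}(P_G)$, while every $y \in P_H$ has $h(y) \ge \mathrm{ht}(P_G)+1$, obtained by prepending a longest chain of $P_G$. So $\{P_G, P_H\}$ is cut out by height levels, which every automorphism preserves.

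You instead show that the components of the incomparability graph form a finite chain, which any automorphism must fix blockwise, and that $P$ is a union of components. The height argument is shorter and reuses a tool the paper needs anyway for $P_{14}$. Yours proves a more structural fact: the decomposition into ordinally indecomposable blocks is automorphism-invariant and refines every ordinal-sum decomposition.

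Contrary to your closing remark, your argument never uses non-emptiness. If $P=\emptyset$, then $\phi(P)=P$ trivially, and minima of $\beta$ are attained whether or not the empty poset is allowed. The paper's height bound, by contrast, genuinely uses $P_G \ne \emptyset$.

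One step needs tightening. Your path argument only shows that each individual $c' \in C'$ lies entirely above or entirely below $C$. To conclude $C<C'$ or $C'<C$ as blocks, you must also rule out $c'_2 < c < c'_1$ with $c'_1,c'_2 \in C'$. For this, run the same argument along a path inside $C'$ from $c'_2$ to $c'_1$. It is needed because $C'$ need not be an antichain, so $c'_2<c'_1$ is not by itself a contradiction.
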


\begin{proof}
The construction is the standard ordinal sum, used in this context by Barmak~\cite{Barmak2023}; we sketch it. Let $P_G$ and $P_H$ be minimal posets realizing $G$ and $H$, and form the \emph{ordinal sum} $P_G \oplus P_H$: the disjoint union $P_G \sqcup P_H$ with the orders of the two summands, augmented by $u < v$ for every $u \in P_G$, $v \in P_H$. Heights separate the summands: a chain ending in $P_G$ lies in $P_G$, so its top has height at most $\mathrm{ht}(P_G)$, whereas a chain ending at $y \in P_H$ may be prepended by a longest chain of $P_G$ (here $P_G \ne \emptyset$ by our standing non-emptiness convention), so $h(y) \ge \mathrm{ht}(P_G) + 1$. Since automorphisms preserve heights, they preserve the partition $\{P_G, P_H\}$ and restrict to automorphisms of each summand; conversely, every pair in $\Aut(P_G) \times \Aut(P_H)$ defines an automorphism of $P_G \oplus P_H$. Hence $\Aut(P_G \oplus P_H) \cong G \times H$, and $\beta(G \times H) \le |P_G| + |P_H| = \beta(G) + \beta(H)$.
\end{proof}

Since the entire upper bound of Theorem~\ref{thm:intro-main} rests on the next citation, we quote it in full.

\begin{proposition}[{\cite[Cor.~4.2]{BarmakBarreto2024}}]\label{prop:cyclic}
Let $n = p_1^{r_1} p_2^{r_2} \cdots p_k^{r_k}$, where the $p_i$ are pairwise distinct primes and $r_i \ge 1$. Then
\[
\beta(\mathbb{Z}_n) \;=\; \sum_{i=1}^{k} b(p_i^{r_i})\, p_i^{r_i} \;-\; \varepsilon(n),
\]
where $\varepsilon(n) = 1$ if $3$ and $4$ both exactly divide $n$, and $\varepsilon(n) = 0$ otherwise; here $b(2) = 1$, $b(3) = b(4) = b(5) = b(7) = 3$, and $b(p^r) = 2$ for every other prime power. In particular,
\[
\beta(\mathbb{Z}_2) = 2 \cdot b(2) = 2 \quad\text{and}\quad \beta(\mathbb{Z}_4) = 4 \cdot b(4) = 12,
\]
and only these two values are used in this paper.
\end{proposition}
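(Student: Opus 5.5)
The general formula is \cite[Cor.~4.2]{BarmakBarreto2024}, quoted verbatim, so I would not reprove it. The plan is to specialize it to $n = 2$ and $n = 4$, and, because the upper bound of Theorem~\ref{thm:intro-main} rests entirely on these two values, to sketch a self-contained check of each.

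For the specialization, $n = 2^1$ and $n = 2^2$ each involve the single prime $p_1 = 2$. Hence $\varepsilon(n) = 0$, since $3 \nmid n$. The formula then gives $\beta(\mathbb{Z}_2) = b(2)\cdot 2 = 2$ and $\beta(\mathbb{Z}_4) = b(4)\cdot 4 = 12$. This part is pure bookkeeping.

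For $\beta(\mathbb{Z}_2) = 2$ the direct check is immediate. The $2$-element antichain has automorphism group $S_2 \cong \mathbb{Z}_2$. A $1$-point poset has trivial automorphism group.

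For $\beta(\mathbb{Z}_4) \le 12$ I would exhibit a $12$-point realizer built from three $\mathbb{Z}_4$-orbits of size $4$, say $A = \{a_i\}$, $B = \{b_i\}$, $C = \{c_i\}$ with indices mod $4$. The idea is to take $a_i < b_i$, $a_i < b_{i+1}$, and $b_i < c_i$, $a_{i+2} < c_i$, or a similar chirality-breaking incidence pattern. The goal is that heights separate the three orbits and that the covering pattern kills the reflection $i \mapsto -i$. I would then verify $\Aut = \langle i \mapsto i+1 \rangle$ by the same height-plus-rigidity method announced for $P_{14}$. The key point is that an automorphism fixing one point of $A$ is forced, through the covering relations, to fix everything.

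For $\beta(\mathbb{Z}_4) \ge 12$ I would argue as in the orbit analysis of Proposition~\ref{prop:elementary}. A faithful $\mathbb{Z}_4$-action needs an orbit of size $4$, because orbits of size $1$ or $2$ all have kernel containing $\langle \rho^2 \rangle$. Suppose first that there is only one $4$-orbit, or exactly two. In that case I would show that the restriction of the order to these orbits, together with their comparabilities with the smaller orbits, is invariant under a reflection or a transposition of the orbit. Such a map is a non-rotational automorphism, which is a contradiction. Points in orbits of size $1$ or $2$ can then account for at most a few elements, and one checks that the remaining totals below $12$ still allow such an extra automorphism.

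I expect this lower bound to be the main obstacle, specifically the case-by-case construction of the extra automorphism. That is exactly what Barmak and Barreto carry out, so in the paper I would simply defer to their proof.
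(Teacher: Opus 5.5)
Your proposal is correct and matches the paper. The paper likewise does not reprove the formula: it quotes \cite[Cor.~4.2]{BarmakBarreto2024} and reads off $\beta(\mathbb{Z}_2)=b(2)\cdot 2=2$ and $\beta(\mathbb{Z}_4)=b(4)\cdot 4=12$, with $\varepsilon=0$ because $3\nmid n$.

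Your optional self-contained checks are sound as far as they go, with one caveat about the lower-bound sketch.

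\begin{itemize}
\item Your specific $12$-point pattern does have automorphism group $\mathbb{Z}_4$. A reflection $i\mapsto t-i$ on $A$ would have to act on $C$ as $j\mapsto t+1-j$ when read through $B$. Read through the $A$--$C$ comparabilities, it would have to act as $j\mapsto t-2-j$. These are incompatible, so no reflection survives.
\item In the two-$4$-orbit case of your lower-bound sketch, a reflection or transposition of a single orbit generally fails once there are cross-relations between the two orbits. What is needed there is a coordinated reflection of both orbits, as in Theorem~\ref{thm:strategy2}.
\end{itemize}
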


Applying Proposition~\ref{prop:product-bound} together with Proposition~\ref{prop:cyclic}, we obtain
\[
\beta(\mathbb{Z}_2 \times \mathbb{Z}_4)
\le \beta(\mathbb{Z}_2) + \beta(\mathbb{Z}_4)
= 2 + 12
= 14.
\]
This yields the upper bound in Theorem~\ref{thm:intro-main}.

\begin{lemma}\label{lem:antichain}
If a finite group acts on a finite poset by order-preserving bijections, every orbit is an antichain.
\end{lemma}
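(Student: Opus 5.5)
We argue by contradiction, exploiting the finiteness of $P$ together with the fact that the orbit is closed under the action. Let $\Gamma$ be a finite group acting on the finite poset $P$ by order-preserving bijections, and let $E = \Gamma x$ be an orbit. It suffices to show that no two elements of $E$ are comparable. Since every element of $E$ has the form $gx$ for some $g \in \Gamma$, and the action is by automorphisms, we may translate any comparability $g_1 x < g_2 x$ by $g_1^{-1}$ to obtain $x < gx$ with $g = g_1^{-1}g_2 \in \Gamma$. So the whole statement reduces to ruling out $x < gx$ for a single $g \in \Gamma$.

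Suppose then that $x < gx$. Applying the order-preserving map $g$ repeatedly gives $gx < g^2x$, $g^2x < g^3x$, and so on. By transitivity we obtain a chain
\[
x < gx < g^2x < \cdots < g^k x
\]
for every $k \ge 1$. Since $\Gamma$ is finite, $g$ has finite order $m \ge 1$, so $g^m x = x$. Taking $k = m$ yields $x < x$, which contradicts irreflexivity. Alternatively, we could avoid using the order of $g$ and note instead that the strictly increasing chain $x, gx, g^2x, \dots$ would consist of infinitely many distinct elements of the finite set $P$, which is impossible. Hence no two distinct elements of $E$ are comparable, and $E$ is an antichain.

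There is no real obstacle here. The only point that needs care is the reduction step: we must use that each $g \in \Gamma$ is an order-automorphism, meaning that it preserves strict comparability, both to translate a comparability between two orbit elements back to the base point $x$ and to iterate it along powers of $g$. We also need either the finiteness of $\Gamma$, through the finite order of $g$, or the finiteness of $P$, through the fact that a chain of distinct elements cannot be infinite.
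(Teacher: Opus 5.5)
Your proof is correct and is essentially the paper's: iterate $g$ on $x < gx$ and use the finiteness of the group to return to $x$, giving $x < x$. The only differences are cosmetic. Your translation step is unnecessary, since the paper simply takes the smaller element as the base point. You close the cycle with the order $m$ of $g$, where the paper uses the least $k \ge 1$ with $g^k x = x$.
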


\begin{proof}
Suppose $x < y$ with $x,y$ in the same orbit, so $y = g \cdot x$ for some $g \in G$; in particular $g \cdot x = y \neq x$. Since the action is order-preserving, applying $g$ to the relation $x < g \cdot x$ repeatedly gives $g^i \cdot x < g^{i+1} \cdot x$ for every $i \ge 0$. Let $k \ge 1$ be minimal with $g^k \cdot x = x$ (it exists since $G$ is finite). Chaining the inequalities for $i = 0, 1, \dots, k-1$ yields, by transitivity of $<$,
\[
x < g^{k} \cdot x = x,
\]
i.e., $x < x$, a contradiction since $<$ is irreflexive.
\end{proof}

\noindent\textbf{Notation.} For an element $a$ of a poset $P$ and a subset $B \subseteq P$, define the upper and lower neighborhoods of $a$ in $B$ by
\[
U_a^B := \{b \in B : a < b\}, \qquad D_a^B := \{b \in B : b < a\}.
\]
We will mostly apply this when $a$ lies in a $G$-orbit $A$ and $B$ is a different $G$-orbit.

\begin{lemma}\label{lem:equivariance}
Let $G$ act on a finite poset $P$ by order-preserving bijections, and let $a \in P$ and $B \subseteq P$. Then the sets $U_a^B$, $D_a^B$ are $\Stab_G(a)$-invariant whenever $B$ is $G$-invariant. Moreover, for every $g \in G$ and every $G$-invariant $B$, $U_{ga}^B = g \cdot U_a^B$ and $D_{ga}^B = g \cdot D_a^B$.
\end{lemma}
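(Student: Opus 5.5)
The plan is to prove both assertions directly from the definitions, using only that each $g \in G$ acts as an order-preserving bijection of $P$ whose inverse $g^{-1}$ is also order-preserving, and that $B$ is $G$-invariant. I would prove the equivariance identities first, since the invariance statement then follows by specializing to $g \in \Stab_G(a)$.

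\textbf{Step 1: the identity $U_{ga}^B = g \cdot U_a^B$.} For the inclusion $\supseteq$, take $b \in U_a^B$, so $b \in B$ and $a < b$. Applying $g$ gives $ga < gb$, since $g$ is order-preserving and injective (so strict inequality is preserved). Also $gb \in B$ because $B$ is $G$-invariant. Hence $gb \in U_{ga}^B$.

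For the inclusion $\subseteq$, take $c \in U_{ga}^B$ and set $b = g^{-1}c$. Then $b \in B$ by invariance under $g^{-1}$, and applying $g^{-1}$ to $ga < c$ gives $a < b$. Hence $c = gb \in g\cdot U_a^B$.

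The identity $D_{ga}^B = g\cdot D_a^B$ follows by the same argument with the inequalities reversed.

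\textbf{Step 2: invariance.} For $g \in \Stab_G(a)$ we have $ga = a$. Step 1 then gives $g\cdot U_a^B = U_{ga}^B = U_a^B$, and likewise $g\cdot D_a^B = D_a^B$.

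There is no real obstacle here. The only point needing care is that both inclusions in Step 1 require $B$ to be invariant under $g$ and under $g^{-1}$. Both hold because $B$ is $G$-invariant and $G$ is a group; the inverse is used in the reverse inclusion.
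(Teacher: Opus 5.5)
Your proof is correct and follows the paper's argument: both verify the claims directly from the definitions, using that $g$ and $g^{-1}$ act as order-preserving bijections that preserve $B$. The only difference is the order. The paper checks $\Stab_G(a)$-invariance first and then equivariance by a chain of equivalences, whereas you prove equivariance first and obtain invariance as the special case $ga = a$, which is slightly more economical.
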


\begin{proof}
Let $h \in \Stab_G(a)$ and let $B$ be $G$-invariant. If $b \in U_a^B$, then $a < b$ and $b \in B$, so $a = ha < hb$ since the action is order-preserving and $hb \in B$ by $G$-invariance; thus $h \cdot U_a^B \subseteq U_a^B$, with equality since $h^{-1} \in \Stab_G(a)$ gives the reverse inclusion. The argument for $D_a^B$ is identical with the inequality reversed. For equivariance, $b \in U_{ga}^B$ iff $b \in B$ and $ga < b$ iff $g^{-1}b \in B$ and $a < g^{-1}b$ (using $G$-invariance of $B$) iff $g^{-1}b \in U_a^B$ iff $b \in g \cdot U_a^B$; and analogously for $D$.
\end{proof}

\begin{lemma}\label{lem:S4-embed}
$G$ does not embed into $S_n$ for $n \le 4$.
\end{lemma}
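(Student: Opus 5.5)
It suffices to treat $n = 4$: for $n \le 3$ the group $S_n$ embeds into $S_4$ by fixing the extra points, so any embedding $G \hookrightarrow S_n$ would give one into $S_4$. Even more simply, for $n \le 3$ we have $|S_n| = n! \le 6$, which is not divisible by $|G| = 8$, so Lagrange's theorem already rules these cases out.

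For $n = 4$, divisibility alone fails, since $8 \mid 24$. The plan is to use Sylow theory. If $G$ embedded in $S_4$, its image would be a subgroup of order $8 = 2^3$. Since $|S_4| = 24 = 2^3 \cdot 3$, this image would be a Sylow $2$-subgroup of $S_4$. All Sylow $2$-subgroups are conjugate, hence isomorphic, and one of them is the dihedral group $D_4$ of symmetries of a square acting on its vertices. So $G$ would be isomorphic to $D_4$. That is impossible because $G$ is abelian ($\sigma$ and $\rho$ commute) while $D_4$ is not: a rotation by $90^\circ$ and a reflection do not commute.

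A Sylow-free alternative is to count elements of order $4$. From the Notation list, $G$ has four of them: $\rho, \rho^3, \sigma\rho, \sigma\rho^3$. In $S_4$ the elements of order $4$ are exactly the $4$-cycles. These form a single $S_4$-conjugacy class of size $6$, so they come in three inverse pairs. Each pair generates a cyclic group of order $4$, and each such group contains exactly two $4$-cycles. The four order-$4$ elements of $G$ generate the two distinct cyclic subgroups $\langle\rho\rangle$ and $\langle\sigma\rho\rangle$. Their images would be two distinct cyclic subgroups of order $4$ in $S_4$, generated by two non-inverse $4$-cycles. These two $4$-cycles would have to commute. But the centralizer of a $4$-cycle in $S_4$ has order $24/6 = 4$, so it is just the cyclic group the $4$-cycle generates. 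That contradicts the existence of a second, non-inverse $4$-cycle commuting with it.

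No step is a real obstacle. The only point needing care is justifying that the Sylow $2$-subgroup of $S_4$ is non-abelian, or equivalently computing the centralizer of a $4$-cycle, and that is a routine check.
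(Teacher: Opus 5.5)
Your proof is correct, but your main route for $n=4$ differs from the paper's. You note that the image of $G$ would be a subgroup of order $8$ in $S_4$, hence a Sylow $2$-subgroup. It is therefore conjugate to the dihedral group $D_4=\langle (1\,2\,3\,4),(1\,3)\rangle$, which is non-abelian.

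The paper argues directly instead. It shows that $f(\rho)$ is a $4$-cycle $\pi$ and computes by hand that $C_{S_4}(\pi)=\langle\pi\rangle$, since a permutation commuting with $\pi$ is determined by its value at $1$. Then $f(\sigma)$, being an involution commuting with $\pi$, must equal $\pi^2=f(\rho^2)$, which contradicts injectivity.

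Your Sylow argument is shorter and proves more: every subgroup of order $8$ in $S_4$ is dihedral, so no abelian group of order $8$ embeds. The price is that it relies on Sylow's conjugacy theorem. The paper's argument is entirely elementary and uses only the commutation of $\sigma$ with $\rho$.

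Your Sylow-free alternative is essentially the paper's proof, since both hinge on $C_{S_4}(\pi)=\langle\pi\rangle$. There are two small differences. You obtain the centralizer from orbit--stabilizer (the class of $4$-cycles has size $6$, so the centralizer has order $4$) rather than by the "determined by $\tau(1)$" count. You also apply it to the order-$4$ element $\sigma\rho$, getting two distinct commuting cyclic subgroups, rather than to the involution $\sigma$.
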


\begin{proof}
For $n \le 3$ we have $|S_n| < 8 = |G|$, so no injection $G \hookrightarrow S_n$ exists.

For $n = 4$, suppose for contradiction that $f \colon G \hookrightarrow S_4$ is an injective homomorphism. The image $f(\rho) \in S_4$ has order $4$ (since $\rho$ has order $4$ and $f$ is injective), so $f(\rho)$ is a $4$-cycle in $S_4$. Denote it $\pi := f(\rho)$. We claim $C_{S_4}(\pi) = \langle \pi \rangle$. The inclusion $\langle \pi \rangle \subseteq C_{S_4}(\pi)$ is immediate. For the reverse, write $\pi = (1\;2\;3\;4)$; any $\tau \in S_4$ satisfying $\tau\pi = \pi\tau$ satisfies $\tau(\pi(x)) = \pi(\tau(x))$ for all $x$. The condition $\tau(2) = \tau(\pi(1)) = \pi(\tau(1))$ then determines $\tau(2)$ from $\tau(1)$, and similarly $\tau(3)$ and $\tau(4)$, so $\tau$ is completely determined by the single value $\tau(1) \in \{1,2,3,4\}$. The four resulting permutations are exactly $\pi^0, \pi^1, \pi^2, \pi^3$, giving $C_{S_4}(\pi) \subseteq \langle\pi\rangle$ and hence equality.

Since $\sigma$ commutes with $\rho$ in $G$, the image $f(\sigma)$ commutes with $\pi$ in $S_4$; hence $f(\sigma) \in C_{S_4}(\pi) = \langle \pi \rangle$. The unique element of order $2$ in $\langle \pi \rangle$ is $\pi^2$, so $f(\sigma) = \pi^2 = f(\rho)^2 = f(\rho^2)$. Injectivity of $f$ forces $\sigma = \rho^2$ in $G$, contradicting the fact that $\sigma$ and $\rho^2$ are distinct elements of order $2$ in $\mathbb{Z}_2 \times \mathbb{Z}_4$.
\end{proof}

\subsection{4-orbit types, canonical partition, and the stabilizer enumeration}\label{subsec:fourorbits}

Since $G$ is abelian, all element stabilizers within a single $G$-orbit coincide: for $x$ in an orbit $E$ and any $g \in G$, one has $\Stab_G(g \cdot x) = g\, \Stab_G(x)\, g^{-1} = \Stab_G(x)$. For any orbit $E$, write $K_E := \Stab_G(x)$ for any $x \in E$. Equivalently, $K_E = \bigcap_{x \in E} \Stab_G(x)$ is the kernel of the restricted action $G \to \Sym(E)$; this is the meaning of the term \emph{kernel} throughout, in particular in the phrase ``cross-kernel structure'' of Section~\ref{sec:typeC}. By the orbit--stabilizer theorem, $|K_E| = |G|/|E|$, so $K_E$ has order $2$ when $E$ is a $4$-orbit.

For a $4$-orbit $C$, $K_C$ is an order-$2$ subgroup of $G$, and we distinguish two types. We say $C$ is of \emph{cyclic type} if $K_C \in \{\langle \sigma \rangle, \langle \sigma\rho^2 \rangle\}$; in this case $G/K_C \cong \mathbb{Z}_4$ and $\rho$ acts on $C$ as a $4$-cycle, and we label $C = \{c_0, c_1, c_2, c_3\}$ so that $\rho \cdot c_k = c_{k+1 \bmod 4}$. We say $C$ is of \emph{Klein type} if $K_C = \langle \rho^2 \rangle$; in this case $G/K_C \cong V_4$ acts regularly on $C$.

\begin{definition}\label{def:canonical-partition}
The \emph{canonical partition} $\{P_1^C, P_2^C\}$ of a $4$-orbit $C$ is defined as follows. If $C$ is of cyclic type, then $P_1^C = \{c_0, c_2\}$ and $P_2^C = \{c_1, c_3\}$ (the orbits of $\rho^2|_C$). If $C$ is of Klein type, then $\{P_1^C, P_2^C\}$ is the orbit partition of $C$ under the permutation induced by $\sigma$ on $C$. This is well defined. First, $\sigma$ acts on $C$ without fixed points: since $G$ is abelian, $K_C = \langle\rho^2\rangle$ is the stabilizer of \emph{every} element of $C$, so $\sigma \notin K_C$ fixes \emph{no} element of $C$; as $\sigma$ has order $2$, it induces a fixed-point-free involution on the $4$-element set $C$, whose orbit type is necessarily $2+2$, giving a genuine partition $\{P_1^C, P_2^C\}$ into two $2$-element classes. Second, $\sigma$ and $\sigma\rho^2$ induce the same permutation of $C$: their product $\sigma \cdot (\sigma\rho^2) = \rho^2$ lies in $K_C = \langle \rho^2 \rangle$, so they act identically on $C$, and the partition is independent of which of the two elements is used to define it.
\end{definition}

The following enumeration of the possible neighborhood sets $U_z^C, D_z^C$ along a cyclic-type $4$-orbit is used in the analyses of Types~B and~C alike. Note that, by Table~\ref{tab:subgroups}, the subgroups appearing in its hypothesis are exactly the possible stabilizers of elements lying in $G$-orbits of size $1$ or $2$, so the lemma applies to every element of $P$ whose orbit has size at most $2$.

\begin{lemma}\label{lem:stab-enum}
Let $G$ act on a finite poset $P$ by order-preserving bijections, let $C$ be a cyclic-type $4$-orbit, and let $z \in P$ be an element with
\[
\Stab_G(z) \in \{\,G,\ \langle \rho \rangle,\ \langle \sigma\rho \rangle,\ \langle \sigma, \rho^2 \rangle\,\}.
\]
Then
\[
U_z^C,\ D_z^C \in \{\emptyset, P_1^C, P_2^C, C\};
\]
moreover, if $\Stab_G(z) \ne \langle \sigma, \rho^2 \rangle$ then $U_z^C, D_z^C \in \{\emptyset, C\}$, so the partition values $P_1^C, P_2^C$ can occur only when $\Stab_G(z) = \langle \sigma, \rho^2 \rangle$.
\end{lemma}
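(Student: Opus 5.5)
By Lemma~\ref{lem:equivariance}, applied with the $G$-invariant set $B = C$, the sets $U_z^C$ and $D_z^C$ are invariant under $H := \Stab_G(z)$. Hence each is a union of $H$-orbits on $C$. The plan is simply to compute, for each of the four admissible stabilizers, the orbit partition of $C$ under $H$. All statements about $D_z^C$ follow identically, so I will only discuss $U_z^C$.

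Since $C$ is of cyclic type, $K_C \in \{\langle\sigma\rangle, \langle\sigma\rho^2\rangle\}$, and $G/K_C \cong \mathbb{Z}_4$ acts on $C$ as the cyclic group generated by the $4$-cycle $\rho|_C \colon c_k \mapsto c_{k+1}$. The key step is to determine the image of each candidate $H$ in this quotient, that is, the permutation group $H|_C$ induced on $C$.

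\textbf{Cases $H = G$ and $H = \langle\rho\rangle$.} Here $H$ contains $\rho$, so $H|_C$ contains the $4$-cycle and is transitive on $C$.

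\textbf{Case $H = \langle\sigma\rho\rangle$.} Since $\sigma \in K_C$ or $\sigma\rho^2 \in K_C$, the element $\sigma$ acts on $C$ either trivially or as $\rho^2$. Thus $\sigma\rho$ acts as $\rho$ or as $\rho^3$, which is again a $4$-cycle, so $H|_C$ is again transitive. In these three cases the only $H$-invariant subsets of $C$ are $\emptyset$ and $C$, which proves the ``moreover'' clause.

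\textbf{Case $H = \langle\sigma,\rho^2\rangle$.} The elements $\sigma$ and $\sigma\rho^2$ act as $\mathrm{id}$ or $\rho^2$ on $C$, so $H|_C = \langle \rho^2|_C \rangle$. Its orbits are exactly $\{c_0,c_2\} = P_1^C$ and $\{c_1,c_3\} = P_2^C$. The invariant subsets are therefore the unions of these two orbits, namely $\emptyset$, $P_1^C$, $P_2^C$ and $C$.

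The only point needing care is the case $H = \langle\sigma\rho\rangle$. There one must use the cyclic-type hypothesis to see that $\sigma\rho$ still acts as a $4$-cycle rather than collapsing. This fails for a Klein-type orbit, which is presumably why the lemma is restricted to cyclic type. The case split on whether $K_C = \langle\sigma\rangle$ or $K_C = \langle\sigma\rho^2\rangle$ handles it directly.
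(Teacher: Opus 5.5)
Your proof is correct and follows the paper's argument essentially verbatim: you reduce to $\Stab_G(z)$-invariance via Lemma~\ref{lem:equivariance}, then compute the image of each candidate stabilizer in $G/K_C \cong \mathbb{Z}_4$. The case split $K_C \in \{\langle\sigma\rangle, \langle\sigma\rho^2\rangle\}$ is what shows that $\sigma\rho$ acts as $\rho$ or $\rho^3$.

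One small point about your closing remark: the cyclic-type hypothesis is equally needed for $\langle\rho\rangle$ to be transitive on $C$, since on a Klein-type orbit $\rho$ also acts as an involution.
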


\begin{proof}
By Lemma~\ref{lem:equivariance} (with $B = C$, which is $G$-invariant), both sets are $\Stab_G(z)$-invariant, so it suffices to determine the invariant subsets of $C$ under the image of $\Stab_G(z)$ in $\Sym(C)$.

If $\Stab_G(z) \in \{G, \langle\rho\rangle\}$, the image contains the image of $\rho$, which generates $\langle \rho|_C \rangle$ and acts transitively on $C$; the invariant subsets are $\emptyset$ and $C$.

If $\Stab_G(z) = \langle \sigma\rho \rangle$, the image is generated by the image of $\sigma\rho$ in $G/K_C \cong \mathbb{Z}_4$. When $K_C = \langle \sigma \rangle$ we have $\sigma \in K_C$, so the image of $\sigma\rho$ equals that of $\rho$; when $K_C = \langle \sigma\rho^2 \rangle$, the relation $\sigma\rho^2 \in K_C$ gives the image of $\sigma$ equal to that of $\rho^{-2} = \rho^2$, hence the image of $\sigma\rho$ equal to that of $\rho^3$. In either case the image of $\sigma\rho$ has order $4$ and generates $G/K_C$, so $\langle \sigma\rho \rangle$ acts transitively on $C$ and again the invariant subsets are $\emptyset$ and $C$.

If $\Stab_G(z) = \langle \sigma, \rho^2 \rangle$, then for either choice of $K_C \in \{\langle\sigma\rangle, \langle\sigma\rho^2\rangle\}$ the image in $\Sym(C)$ is $\{\mathrm{id}, \rho^2|_C\} = \{\mathrm{id}, (c_0c_2)(c_1c_3)\}$, whose invariant subsets are exactly $\emptyset$, $P_1^C$, $P_2^C$, and $C$.

These arguments use only $\Stab_G(z)$-invariance, not the direction of the order relation, so they apply equally to $U_z^C$ and $D_z^C$.
\end{proof}

\section{Types D, A, and B}\label{sec:smalltypes}

\subsection{Type D: orbits of size at most $2$}\label{sec:typeD}

\begin{lemma}\label{lem:typeD}
If all orbits of a $G$-action on $P$ have size $\le 2$, then $\rho^2$ acts trivially on $P$.
\end{lemma}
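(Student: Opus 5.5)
The plan is to reduce the statement to a fact about kernels of small orbits, using (K2) from the notation section. Fix an arbitrary $G$-orbit $E$ of $P$. Since $G$ is abelian, every point of $E$ has the same stabilizer $K_E$, and $K_E$ is the kernel of the restricted action $G \to \Sym(E)$ (Section~\ref{subsec:fourorbits}). By orbit--stabilizer, $|K_E| = 8/|E|$. The hypothesis $|E| \le 2$ therefore gives $|K_E| \in \{4, 8\}$. We will show that $\rho^2 \in K_E$ for every orbit $E$. Once this holds, $\rho^2$ fixes every point of every orbit, so it acts trivially on $P$.

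The argument for a single orbit splits into two cases.
\begin{enumerate}
\item If $|E| = 1$, then $K_E = G$, which contains $\rho^2$.
\item If $|E| = 2$, then $K_E$ is a subgroup of order $4$. By Table~\ref{tab:subgroups} it is one of $\langle\rho\rangle$, $\langle\sigma\rho\rangle$, $\langle\sigma,\rho^2\rangle$. By (K2), every order-$4$ subgroup of $G$ contains $\rho^2$.
\end{enumerate}
So $\rho^2 \in K_E$ in both cases. Since $P$ is the disjoint union of its orbits, $\rho^2 \cdot x = x$ for all $x \in P$.

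An alternative to (K2) is to note that $G/K_E$ has order at most $2$, so every square lies in $K_E$, and in particular $\rho^2 = (\rho)^2 \in K_E$. I would mention this in a remark, since it avoids the subgroup table entirely.

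There is no real obstacle here. The only point needing care is that stabilizers are constant along an orbit, which the abelian hypothesis supplies. The lemma is meant to feed the Type~D contradiction: the action is then not faithful, or equivalently the given copy of $G$ is not all of $\Aut(P)$. That consequence is drawn after the lemma rather than inside it.
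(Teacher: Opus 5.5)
Your proof is correct, but your main route differs from the paper's.

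The paper argues directly in $\Sym(E)$. Since $|E| \le 2$, the group $\Sym(E)$ has order at most $2$, so $(\rho|_E)^2 = \mathrm{id}_E$ on each orbit, and hence $\rho^2$ acts trivially. This is essentially the alternative you put in a remark: squares become trivial in $G/K_E$, a group of order at most $2$. That argument needs neither the abelian hypothesis, nor constancy of stabilizers along orbits, nor the subgroup table. It shows more generally that for any group acting with all orbits of size $\le 2$, every square acts trivially.

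Your main argument instead goes through the kernels $K_E$ and property (K2). This is specific to $\mathbb{Z}_2 \times \mathbb{Z}_4$. It does match the kernel-intersection style the paper uses later: the no-$4$-orbit case of Lemma~\ref{lem:8orbit} and Lemma~\ref{lem:faithful-exclusion}(i) invoke exactly the fact that every order-$4$ subgroup contains $\rho^2$. Either proof works. The $\Sym(E)$ version is shorter and more general, so you lose nothing by making your remark the main proof.

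One small point outside the proof itself: non-faithfulness is not ``equivalent'' to the given copy of $G$ being a proper subgroup of $\Aut(P)$. In the paper's setup $G$ is realized as a subgroup of $\Aut(P)$, so its action is faithful by construction. Type~D is excluded because $\rho^2$ acting trivially contradicts that faithfulness directly, not by producing an extra automorphism.
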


\begin{proof}
The set $P$ decomposes as a disjoint union of $G$-orbits. For each orbit $E$ with $|E| \le 2$, the restriction $\rho|_E$ is a permutation of $E$ of order dividing $|\Sym(E)| \le 2$; hence $(\rho|_E)^2 = \mathrm{id}_E$. Thus $\rho^2$ acts as the identity on each orbit, and therefore on all of $P$.
\end{proof}

In particular, $G$ does not act faithfully in Type~D.

\subsection{Type A: $8$-orbit configurations}\label{sec:8orbit}

\begin{lemma}\label{lem:8orbit}
Let $G$ act faithfully on a finite poset $P$ with $|P| \le 13$, and suppose $P$ contains an orbit $C$ of size $8$ together with at least one further orbit (so in particular $|P| \ge 9$). Then $\Aut(P) \supsetneq G$.
\end{lemma}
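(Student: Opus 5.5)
The remaining points form $R := P \setminus C$ with $1 \le |R| \le 5$. Since $C$ is a regular orbit, $G$ acts simply transitively on $C$. Every orbit in $R$ has size in $\{1,2,4\}$, because a second $8$-orbit would need $16$ points. The plan is to build an automorphism $\phi$ that fixes $R$ pointwise and acts on $C$ by some permutation. We take $\phi|_C$ to be a map that commutes with the $G$-action and is not induced by any element of $G$ acting on all of $P$.

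The natural candidate is right multiplication. Identify $C$ with $G$ via $g \mapsto g\cdot c_0$. Since $G$ is abelian, the maps $g\cdot c_0 \mapsto gh\cdot c_0$ are exactly the $G$-translations of $C$. So instead we use an element $h \in G$ that fixes every point of $R$ and try $\phi := h$ on $C$ and $\mathrm{id}$ on $R$. In general no such nontrivial $h$ need exist, so the real plan is as follows. By Lemma~\ref{lem:antichain}, $C$ is an antichain. Relations inside $R$ are untouched by $\phi$, so only relations between $C$ and $R$ need checking. For $z \in R$ the sets $U_z^C$ and $D_z^C$ are $\Stab_G(z)$-invariant by Lemma~\ref{lem:equivariance}. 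A map $\phi$ fixing $R$ and permuting $C$ is an automorphism exactly when $\phi(U_z^C) = U_z^C$ and $\phi(D_z^C) = D_z^C$ for every $z \in R$.

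The key step is therefore to find a nonidentity permutation $\pi$ of $C$ that preserves every set $U_z^C, D_z^C$ with $z \in R$. We must then also rule out that $\pi \times \mathrm{id}_R$ equals some element of $G$. An element $g \ne e$ of $G$ moves every point of $C$, so it can coincide with $\phi$ only if $\pi$ is fixed-point-free and equals $g|_C$ while $g$ fixes $R$ pointwise. Faithfulness prevents this whenever $R$ has a nontrivial orbit and $\pi$ is not of the form $g|_C$.

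To find $\pi$, consider the Boolean algebra on $C$ generated by all the sets $U_z^C, D_z^C$. Any permutation preserving each atom works. Choose one representative $z$ from each $G$-orbit in $R$; there are at most $5$ such orbits. The sets attached to the other points of the orbit are $G$-translates of those attached to $z$. For an $m$-orbit the stabilizer has order $8/m$, so each such set is a union of cosets of $\Stab_G(z)$ in the identification $C \cong G$. I would split into cases by the orbit structure of $R$: the multiset of orbit sizes from $\{1,2,4\}$ summing to at most $5$.

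If every point of $R$ has stabilizer containing a common nontrivial subgroup $K$, then all the sets are unions of $K$-cosets. We can then swap two elements inside one $K$-coset $xK$ and fix everything else; this $\pi$ has fixed points, so it is not in $G$. By (K2), every orbit of size $1$ or $2$ has stabilizer containing $\langle\rho^2\rangle$. So the only obstruction is a $4$-orbit in $R$ whose kernel is $\langle\sigma\rangle$ or $\langle\sigma\rho^2\rangle$. In that situation $|R| \le 5$ leaves at most one extra fixed point. The $4$-orbit's translates then cut $C$ into $\langle\sigma\rangle$-cosets (or $\langle \sigma\rho^2\rangle$-cosets), and the fixed point contributes only $\emptyset$ or $C$. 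Hence all sets are again unions of cosets of a single order-$2$ subgroup, and the swap trick applies.

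The main obstacle I expect is that I must check that the sets coming from a single $4$-orbit are unions of cosets of its kernel $K$. This should follow from $K$-invariance: each such set is invariant under $\Stab_G(z) = K$, and $K$ acts on $C$ by translation, so it is a union of $K$-orbits, i.e.\ of $K$-cosets. With that, in every case some order-$2$ subgroup $K \le \bigcap_{z\in R}\Stab_G(z)$ exists. Swapping $x$ and $kx$ for a single coset while fixing the other six points of $C$ and all of $R$ then gives $\phi \in \Aut(P)\setminus G$.
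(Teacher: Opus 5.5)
Your final argument is correct and is essentially the paper's proof. In both, you find a nontrivial subgroup contained in every stabilizer of a point outside $C$: $\langle\rho^2\rangle$ by (K2) when there is no $4$-orbit, and the $4$-orbit's kernel when there is one, the remaining point then being a fixed point. You then swap $x$ with $kx$ in $C$, fix everything else, and observe that this map has fixed points on $C$ while non-identity elements of $G$ act freely there. You check that each $U_z^C, D_z^C$ is $K$-invariant from the side of $z \in R$, whereas the paper checks $U_{c_1}^E = U_{c_0}^E$ from the side of $C$; this is the same fact.

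The exploratory remarks early on should be cut. Such an $h$ always exists here, as your own case analysis shows, and extending $h|_C$ by the identity on $R$ would just reproduce $h \in G$. The appeal to faithfulness for $\phi \notin G$ is also unnecessary.
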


\begin{proof}
The orbit $C$ has size $8$, and $|G| = 8$. Hence for any $c \in C$,
\[
|\Stab_G(c)| = |G|/|C| = 1,
\]
so the stabilizer is trivial. Thus the action of $G$ on $C$ is free and transitive, i.e., regular, and $K_C = \{e\}$.

Set $H := \bigcap_{E \ne C} K_E$, the intersection over all $G$-orbits $E \ne C$. We show $H \ne \{e\}$ by enumerating the possibilities for these orbits, which together contain at most $|P| - 8 \le 5$ elements. In particular, since $5 < 8$, no further orbit of size $8$ can occur, and at most one $4$-orbit can occur (two would need $8$ elements). We split on the number of $4$-orbits.

\emph{No $4$-orbit.} Then every orbit $E \ne C$ has size $1$ or $2$, so $|K_E| \in \{8, 4\}$, i.e., $K_E$ is either $G$ or an order-$4$ subgroup. By property~\hyperref[K2]{(K2)}, every order-$4$ subgroup contains $\langle\rho^2\rangle$, and $G \supseteq \langle\rho^2\rangle$ as well; hence $K_E \supseteq \langle\rho^2\rangle$ for every such $E$, giving $H \supseteq \langle \rho^2 \rangle \ne \{e\}$.

\emph{Exactly one $4$-orbit $D$.} The elements outside $C \cup D$ number at most $13 - 8 - 4 = 1$, so the remaining orbits form either nothing or a single fixed point $F$, which has $K_F = G \supseteq K_D$. Hence $H = K_D$ in either sub-case (adjoining $K_F = G$ to an intersection changes nothing), and $|H| = |K_D| = 2$, so $H \ne \{e\}$.

In both cases $H \ne \{e\}$.

Choose $h \in H \setminus \{e\}$ and $c_0 \in C$, and let $c_1 = h \cdot c_0$. Since the action on $C$ is free and $h \ne e$, we have $c_1 \neq c_0$. Define $\phi$ to swap $c_0, c_1$ and fix every other element of $P$.

We verify $\phi \in \Aut(P)$. As $\phi$ is the identity off $\{c_0,c_1\}$, the only relations that could be affected are those between $\{c_0,c_1\}$ and the rest of $P$. Relations \emph{within} $C$ need no checking: $C$ is an antichain (Lemma~\ref{lem:antichain}), so in particular $c_0$ and $c_1$ are incomparable to each other and to every other element of $C$, and these (non-)relations are unchanged by a permutation internal to $C$. For each orbit $E \ne C$, the element $h \in K_E$ acts trivially on $E$, so by Lemma~\ref{lem:equivariance} (with $B = E$, which is $G$-invariant),
\[
U_{c_1}^E = U_{h\cdot c_0}^E = h \cdot U_{c_0}^E = U_{c_0}^E,
\]
and likewise $D_{c_1}^E = D_{c_0}^E$. Thus $c_0$ and $c_1$ have identical upper and lower neighborhoods in every $E \ne C$, so swapping them preserves all order relations between $C$ and $P \setminus C$; relations within $P \setminus C$ are preserved since $\phi$ fixes $P \setminus C$ pointwise. Hence $\phi \in \Aut(P)$.

Finally, $\phi \notin G$: every non-identity $g \in G$ acts on $C$ regularly, hence without fixed points, whereas $\phi|_C$ is a transposition fixing six elements of $C$. Therefore $\Aut(P) \supsetneq G$.
\end{proof}

\begin{lemma}\label{lem:pure-8orbit}
If $G$ acts on $P$ by order-automorphisms and $P$ is itself a single $G$-orbit of size $8$, then $\Aut(P) \cong S_8 \not\cong G$.
\end{lemma}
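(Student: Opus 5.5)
The plan is to use Lemma~\ref{lem:antichain} to show that $P$ carries no strict relations at all, and then read off the automorphism group. Suppose $P$ is a single $G$-orbit of size $8$. By Lemma~\ref{lem:antichain}, every $G$-orbit is an antichain. Since $P$ itself is the unique orbit, $P$ is an antichain on $8$ points, so no two distinct elements are comparable.

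Next, every bijection $P \to P$ preserves the (trivial) order: $x \le y$ holds iff $x = y$, and bijections preserve equality. Conversely, every automorphism is in particular a bijection. Hence $\Aut(P) = \Sym(P) \cong S_8$.

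Finally, to conclude $S_8 \not\cong G$, compare orders: $|S_8| = 40320 \ne 8 = |G|$. Alternatively, $S_8$ is non-abelian while $G$ is abelian. Either observation suffices.

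No step here is a genuine obstacle. The only point needing care is that the hypothesis gives the antichain structure through Lemma~\ref{lem:antichain} rather than by assumption. In particular, the argument uses only that $G$ acts by order-preserving bijections, so faithfulness need not be invoked, although it holds automatically since the action is regular.
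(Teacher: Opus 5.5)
Your proposal is correct and follows the paper's proof exactly: Lemma~\ref{lem:antichain} makes $P$ an antichain, so every bijection is an automorphism and $\Aut(P) = \Sym(P) \cong S_8$, which differs from $G$ in order. Your side remarks (non-abelianness of $S_8$ as an alternative, and automatic faithfulness from regularity) are also valid.
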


\begin{proof}
By Lemma~\ref{lem:antichain} every $G$-orbit is an antichain, so $P$ is an antichain. Every bijection on an antichain is order-preserving, hence $\Aut(P) = \Sym(P) \cong S_8$. Since $|S_8| = 40320 \neq 8 = |G|$, we have $\Aut(P) \not\cong G$.
\end{proof}

\subsection{Type B: single-$4$-orbit configurations}\label{sec:hiddeninv}

\begin{lemma}\label{lem:hiddeninv}
Let $G$ act faithfully on $P$ with exactly one orbit $C$ of size $4$, no orbit of size $8$, and all other orbits of size $\le 2$. Then there exists $\tau \in \Aut(P) \setminus G$.
\end{lemma}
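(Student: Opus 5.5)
The plan is to use faithfulness to force $C$ to be of cyclic type, and then take $\tau$ to be the transposition of one class of the canonical partition of $C$, fixing everything else.

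\textbf{Step 1: $C$ is of cyclic type.} Every orbit $E \ne C$ has size $1$ or $2$, so $K_E$ is either $G$ or an order-$4$ subgroup. By~\hyperref[K2]{(K2)}, $K_E \supseteq \langle \rho^2 \rangle$ in every case. So $\rho^2$ acts trivially on $P \setminus C$. Faithfulness then forces $\rho^2 \notin K_C$, i.e.\ $K_C \ne \langle \rho^2 \rangle$. Hence $K_C \in \{\langle\sigma\rangle, \langle\sigma\rho^2\rangle\}$, and $C$ is of cyclic type. Label $C = \{c_0,c_1,c_2,c_3\}$ with $\rho \cdot c_k = c_{k+1}$, so that $P_1^C = \{c_0,c_2\}$ and $P_2^C = \{c_1,c_3\}$.

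\textbf{Step 2: define $\tau$ and check it is an automorphism.} Let $\tau$ swap $c_0$ and $c_2$ and fix every other element of $P$.

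Relations inside $C$ are unaffected, since $C$ is an antichain by Lemma~\ref{lem:antichain}. Relations inside $P \setminus C$ are unaffected, since $\tau$ fixes that set pointwise.

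For $z \notin C$, the orbit of $z$ has size at most $2$. So $\Stab_G(z)$ is one of $G, \langle\rho\rangle, \langle\sigma\rho\rangle, \langle\sigma,\rho^2\rangle$, and Lemma~\ref{lem:stab-enum} applies. It gives $U_z^C, D_z^C \in \{\emptyset, P_1^C, P_2^C, C\}$. Each of these four sets is invariant under the transposition $(c_0\,c_2)$. Therefore $z < c$ if and only if $z < \tau(c)$, and $c < z$ if and only if $\tau(c) < z$, for all $c \in C$. So all relations between $C$ and $P \setminus C$ are preserved, and $\tau \in \Aut(P)$.

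\textbf{Step 3: $\tau \notin G$.} Suppose $\tau = g \in G$. Since $g$ fixes $c_1 \in C$, we get $g \in \Stab_G(c_1) = K_C$. Because $G$ is abelian, $K_C$ stabilizes every element of $C$, so $g$ acts trivially on $C$. This contradicts $\tau(c_0) = c_2 \neq c_0$. Alternatively: every non-identity element of $G$ that acts non-trivially on $P$ moves at least two points of $C$ as a product of cycles of $G/K_C \cong \mathbb{Z}_4$, and none acts as a single transposition on $C$.

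\textbf{Expected obstacle.} The only real point is Step 1: pinning down $K_C$ via (K2) and faithfulness so that Lemma~\ref{lem:stab-enum} is available. Once that is done, the invariance check in Step 2 is immediate.
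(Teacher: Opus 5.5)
Your proposal is correct and follows essentially the paper's proof: faithfulness forces $C$ to be of cyclic type (you invoke (K2) directly where the paper cites Lemma~\ref{lem:typeD}), the transposition $(c_0\,c_2)$, which the paper notes works as well as its $(c_1\,c_3)$, is verified via Lemma~\ref{lem:stab-enum}, and your stabilizer argument for $\tau \notin G$ is a valid substitute for the paper's argument via the image of $G$ in $\Sym(C)$. Drop the ``Alternatively'' sentence, which is false as stated: the non-identity element of $K_C$ acts non-trivially on $P$ by faithfulness, yet it fixes $C$ pointwise.
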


\begin{proof}
\emph{Step 1: $C$ is of cyclic type.} If $K_C = \langle \rho^2 \rangle$, then $\rho^2$ fixes every element of $C$. Since $C$ is a $G$-orbit, $P \setminus C$ is $G$-invariant (a union of orbits), and the $G$-action restricts to $P \setminus C$ with all orbits of size $\le 2$ by hypothesis. Applying Lemma~\ref{lem:typeD} to this restricted action, $\rho^2$ acts trivially on $P \setminus C$ as well. Hence $\rho^2$ acts trivially on all of $P$, contradicting faithfulness. So $K_C \in \{\langle\sigma\rangle, \langle\sigma\rho^2\rangle\}$ and $C$ is cyclic type.

\emph{Step 2: Construction.} Set $\tau(c_1) = c_3$, $\tau(c_3) = c_1$, and $\tau(z) = z$ for all other $z \in P$ (so $\tau$ fixes $c_0, c_2$ and all of $P \setminus C$). The transposition $(c_0\, c_2)$ would serve equally well: each fixes the canonical partition $\{P_1^C, P_2^C\}$ setwise and lies outside the image of $G$ in $\Sym(C)$.

\emph{Step 3: Order-preservation.} Since $C$ is an antichain (Lemma~\ref{lem:antichain}), $\tau$ preserves all (non-)relations within $C$ and, being the identity there, within $P \setminus C$; only relations between $C$ and $P \setminus C$ require verification, and for these it suffices that $U_z^C$ and $D_z^C$ are $\tau$-invariant for every $z \in P \setminus C$. By hypothesis every $z \in P \setminus C$ lies in an orbit of size $1$ or $2$, so $\Stab_G(z)$ is the full group $G$ (if $z$ is a fixed point) or one of the three order-$4$ subgroups $\langle \rho \rangle$, $\langle \sigma\rho \rangle$, $\langle \sigma, \rho^2 \rangle$ (if $z$ lies in a $2$-orbit). Lemma~\ref{lem:stab-enum} therefore gives
\[
U_z^C,\ D_z^C \in \{\emptyset, P_1^C, P_2^C, C\}.
\]
The map $\tau$ fixes $P_1^C = \{c_0, c_2\}$ pointwise and exchanges the two elements of $P_2^C = \{c_1, c_3\}$, so each of the four candidate subsets is $\tau$-invariant. Hence $\tau \in \Aut(P)$.

\emph{Step 4: $\tau \notin G$.} The image of $G$ in $\Sym(C)$ is $\langle (c_0c_1c_2c_3) \rangle$, whose only non-identity involution is $(c_0c_2)(c_1c_3)$. The transposition $\tau|_C = (c_1c_3)$ is not in this image, so $\tau \notin G$.
\end{proof}

\section{Type C: multi-$4$-orbit configurations}\label{sec:typeC}

Throughout this section, let $m$ denote the number of $G$-orbits of size $4$ in $P$. In a Type~C configuration, $m \ge 2$. When $m = 2$ we write the two $4$-orbits as $C, D$, and when $m = 3$ we write them as $C_1, C_2, C_3$.

\subsection{Cross-kernel structure}

\begin{lemma}\label{lem:cross-kernel}
Let $C, C'$ be $4$-orbits in $P$ with $K_{C'} \ne K_C$. For every $c' \in C'$,
\[
U_{c'}^C,\ D_{c'}^C \in \{\emptyset, P_1^C, P_2^C, C\}.
\]
\end{lemma}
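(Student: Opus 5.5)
By Lemma~\ref{lem:equivariance} (applied with $B = C$, which is $G$-invariant), both $U_{c'}^C$ and $D_{c'}^C$ are $K_{C'}$-invariant, since $\Stab_G(c') = K_{C'}$. So the task is to find the subsets of $C$ invariant under the image of $K_{C'}$ in $\Sym(C)$. Both $K_C$ and $K_{C'}$ have order $2$, and $K_{C'} \ne K_C$. Hence, by (K1), $K_{C'} \cap K_C = \{e\}$, and the generator $k$ of $K_{C'}$ acts on $C$ as a non-trivial element of $G/K_C$. Since $k$ has order $2$, its image in $\Sym(C)$ is a non-trivial involution. The plan is to show that this involution is fixed-point-free with orbits exactly $P_1^C$ and $P_2^C$. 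Its invariant subsets are then precisely $\emptyset$, $P_1^C$, $P_2^C$ and $C$, which is the claim.

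Fixed-point-freeness comes for free. All points of $C$ share the stabilizer $K_C$, and $k \notin K_C$, so $k$ fixes no point of $C$. A fixed-point-free involution on four points has cycle type $2+2$. It remains to check that these two $2$-cycles are the classes of the canonical partition, and for this I split on the type of $C$.

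\emph{Cyclic type.} Here $G/K_C \cong \mathbb{Z}_4$ is generated by the image of $\rho$, acting as the $4$-cycle $(c_0c_1c_2c_3)$. Its unique non-trivial involution is the image of $\rho^2$, namely $(c_0c_2)(c_1c_3)$. So the image of $k$ must be this permutation, whose orbits are $P_1^C = \{c_0,c_2\}$ and $P_2^C = \{c_1,c_3\}$ by Definition~\ref{def:canonical-partition}.

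\emph{Klein type.} Here $K_C = \langle\rho^2\rangle$, so $K_{C'} \in \{\langle\sigma\rangle, \langle\sigma\rho^2\rangle\}$. By Definition~\ref{def:canonical-partition}, $\sigma$ and $\sigma\rho^2$ induce the same fixed-point-free involution on $C$, and its orbit partition is by definition $\{P_1^C, P_2^C\}$. So the image of $k$ again has orbits $P_1^C$ and $P_2^C$.

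The only point needing care is the Klein case. There $G/K_C \cong V_4$ has three distinct involutions, so the argument must use the specific identification of $K_{C'}$, not merely its order. This is exactly why the canonical partition was defined through $\sigma$, which acts like $\sigma\rho^2$, and why the two remaining possibilities for $K_{C'}$ both land on that partition. As in Lemma~\ref{lem:stab-enum}, the argument uses only invariance, so it treats $U$ and $D$ simultaneously.
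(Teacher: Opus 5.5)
Your proof is correct and follows the paper's argument essentially step for step. Both use $K_{C'}$-invariance via Lemma~\ref{lem:equivariance} and (K1) to see that the generator of $K_{C'}$ acts non-trivially on $C$; the cyclic case then goes through the unique involution $\rho^2|_C$, and the Klein case through $K_{C'} \in \{\langle\sigma\rangle,\langle\sigma\rho^2\rangle\}$, both of which induce the same involution $\bar\sigma$. Your separate fixed-point-freeness step is harmless but redundant, since each case identifies the involution explicitly.
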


\begin{proof}
By Lemma~\ref{lem:equivariance} (with $B = C$, which is $G$-invariant), each set is $K_{C'}$-invariant. The non-identity element $h \in K_{C'}$ satisfies $h \notin K_C$: by property~\hyperref[K1]{(K1)}, distinct order-$2$ subgroups of $G$ intersect trivially, so $K_{C'} \cap K_C = \{e\}$, whence the non-identity $h \in K_{C'}$ is not in $K_C$. Thus $h$ acts non-trivially on $C$. Since $K_{C'} = \{e, h\}$, the $K_{C'}$-invariant subsets of $C$ are exactly the unions of $\langle h \rangle$-orbits on $C$. We split by the type of $C$.

\emph{Case 1: $C$ is cyclic type.} The image of $G$ in $\Sym(C)$ is the cyclic group $\langle \rho|_C \rangle = \langle (c_0c_1c_2c_3) \rangle$, which has a unique non-trivial involution, namely $\rho^2|_C$. As $h$ acts on $C$ as a non-trivial involution (it has order $2$ and acts non-trivially), its image must be $\rho^2|_C = (c_0c_2)(c_1c_3)$, with orbit partition $\{P_1^C, P_2^C\} = \{\{c_0, c_2\}, \{c_1, c_3\}\}$. The $\langle h \rangle$-invariant subsets of $C$ are therefore $\{\emptyset, P_1^C, P_2^C, C\}$.

\emph{Case 2: $C$ is Klein type.} Then $K_C = \langle \rho^2 \rangle$, and $G/K_C \cong V_4$ acts regularly on $C$. Write $\bar g := gK_C$, so
\[
V_4 = \{e, \bar{\sigma}, \bar{\rho}, \bar{\sigma\rho}\}, \qquad \bar{\sigma} = \overline{\sigma\rho^2} \text{ since } \sigma\cdot(\sigma\rho^2) = \rho^2 \in K_C.
\]
By Definition~\ref{def:canonical-partition}, $\{P_1^C, P_2^C\}$ is the orbit partition of $C$ under $\bar{\sigma}$. By Table~\ref{tab:subgroups} and property~\hyperref[K2]{(K2)}, the only order-$2$ subgroup that can serve as the kernel of a Klein-type orbit is $\langle \rho^2 \rangle$; a Klein-type $C'$ would therefore have $K_{C'} = \langle \rho^2 \rangle = K_C$, contradicting $K_{C'} \ne K_C$. Hence $C'$ is cyclic-type and $K_{C'} \in \{\langle \sigma \rangle, \langle \sigma\rho^2 \rangle\}$; the non-identity element of either kernel ($\sigma$ or $\sigma\rho^2$) projects to $\bar{\sigma}$. Hence $h$ maps to $\bar\sigma$, the orbit partition of $C$ under $\langle h \rangle$ is exactly $\{P_1^C, P_2^C\}$, and the $\langle h \rangle$-invariant subsets are $\{\emptyset, P_1^C, P_2^C, C\}$.
\end{proof}

\begin{remark}\label{rem:cross-kernel-dependence}
The conclusion of Lemma~\ref{lem:cross-kernel} uses the subgroup structure of $G$ in an essential way: in the Klein-type case the two cyclic kernels $\langle\sigma\rangle$ and $\langle\sigma\rho^2\rangle$ have the \emph{same} image $\bar\sigma$ in $G/K_C \cong V_4$ (as $\sigma \cdot \sigma\rho^2 = \rho^2 \in K_C$), so both induce the same involution on $C$ and a single $2{+}2$ partition arises. In a group in which distinct order-$2$ subgroups had distinct images, the conclusion would have to allow several partitions.
\end{remark}

\subsection{Uniquely-kerneled $4$-orbit and configuration lemmas}

\begin{definition}\label{def:uniquely-kerneled}
A $4$-orbit $C^*$ in $P$ is \emph{uniquely-kerneled} if $K_{C'} \ne K_{C^*}$ for every other $4$-orbit $C'$ in $P$.
\end{definition}

\begin{lemma}\label{lem:exist-uniquely-kerneled}
Let $G$ act faithfully on $P$ with $|P| \le 13$ in a Type~C configuration. If no uniquely-kerneled $4$-orbit exists, then $m = 2$ and $K_C = K_D$.
\end{lemma}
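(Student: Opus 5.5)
We argue by counting. Since $|P| \le 13$ and there is no $8$-orbit, the number $m$ of $4$-orbits satisfies $4m \le 13$, so $m \in \{2,3\}$ (Type~C gives $m \ge 2$). Each $4$-orbit has a kernel among the three order-$2$ subgroups $\langle\sigma\rangle$, $\langle\rho^2\rangle$, $\langle\sigma\rho^2\rangle$. If no $4$-orbit is uniquely-kerneled, then every kernel value that occurs must be shared by at least two $4$-orbits. We analyse the cases $m=2$ and $m=3$ separately.

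\emph{Case $m = 2$.} With two $4$-orbits $C, D$, suppose $K_C \ne K_D$. Then $C$ is uniquely-kerneled by Definition~\ref{def:uniquely-kerneled}, since $D$ is the only other $4$-orbit. Hence the absence of a uniquely-kerneled orbit forces $K_C = K_D$, which is the conclusion.

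\emph{Case $m = 3$.} Here the kernels are distributed among three orbits $C_1, C_2, C_3$, and every value that occurs has multiplicity at least $2$. The multiplicities sum to $3$, so the only possible pattern is $K_{C_1} = K_{C_2} = K_{C_3} =: K$. We show this contradicts faithfulness. The $4$-orbits contain $12$ points, so at most one further point remains, and it is a fixed point with stabilizer $G \supseteq K$. The intersection of all the kernels $K_E$ over all orbits $E$ is therefore $K$, which has order $2$. Thus the non-identity element of $K$ acts trivially on $P$, contradicting faithfulness. So $m = 3$ cannot occur.

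Combining the two cases gives $m = 2$ and $K_C = K_D$. The argument is straightforward throughout; the only point requiring care is the observation that $|P| \le 13$ leaves at most one point outside the three $4$-orbits, and that this point is fixed.
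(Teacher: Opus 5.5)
Your proof is correct and follows essentially the same route as the paper. The $m=2$ case is immediate from the definition, $m \ge 4$ is excluded by $|P| \le 13$, and for $m=3$ the only kernel pattern with no uniquely-kerneled orbit is the all-equal one, which you rule out via faithfulness exactly as the paper does; your multiplicity count simply compresses the paper's three-pattern enumeration, and you argue directly rather than by contrapositive.
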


\begin{proof}
We prove the contrapositive: if either $m \ne 2$, or $m = 2$ with the two $4$-orbits having distinct kernels, then a uniquely-kerneled $4$-orbit exists.

\emph{Case $m = 2$.} If $K_C \ne K_D$, both orbits are uniquely-kerneled.

\emph{Case $m = 3$.} The multiset $\{K_{C_1}, K_{C_2}, K_{C_3}\}$ falls into exactly one of three patterns.

\emph{Pattern (i): all three kernels equal.} Let $K = K_{C_1} = K_{C_2} = K_{C_3}$, an order-$2$ subgroup. The three $4$-orbits contribute $12$ elements, so $|P| - 12 \le 1$ and at most one further element remains. No $8$-orbit occurs (Type~C), no fourth $4$-orbit occurs (it would force $|P| \ge 16$), and no $2$-orbit occurs (it would require two further elements); so the only possible additional orbit is a single fixed point, with stabilizer $G \supseteq K$. Combined with $K_{C_i} = K$, we get $K \subseteq K_F$ for every orbit $F$, hence $K \subseteq \bigcap_F K_F$. Faithfulness gives $\bigcap_F K_F = \{e\}$, forcing $K = \{e\}$, contradicting $|K| = 2$. So Pattern~(i) does not occur.

\emph{Pattern (ii): exactly two kernels equal, one distinct.} The orbit with the distinct kernel is uniquely-kerneled.

\emph{Pattern (iii): all three kernels distinct.} Each $C_i$ is uniquely-kerneled.

In patterns (ii) and (iii) a uniquely-kerneled $4$-orbit exists.

\emph{Case $m \ge 4$.} This forces $|P| \ge 16$, contradicting $|P| \le 13$.
\end{proof}

The next lemma controls the configurations in which the only uniquely-kerneled $4$-orbit is of Klein type. Note that in a Type~C configuration $m \ge 2$, so a $4$-orbit other than $C^*$ always exists; the conclusion $m = 3$ below is therefore not vacuous.

\begin{lemma}\label{lem:klein-unique-config}
Let $G$ act faithfully on $P$ with $|P| \le 13$ in a Type~C configuration (so $m \ge 2$). Suppose a uniquely-kerneled Klein-type $4$-orbit $C^*$ exists, but no cyclic-type uniquely-kerneled $4$-orbit exists. Then:
\begin{enumerate}
\item[(i)] $m = 3$, with one Klein $4$-orbit (namely $C^*$) and two cyclic $4$-orbits sharing a common cyclic kernel;
\item[(ii)] $|P| \in \{12, 13\}$, with configuration $4+4+4$ (no extra orbits) or $4+4+4+1$ (one fixed point); in particular, $P$ has no $2$-orbit.
\end{enumerate}
\end{lemma}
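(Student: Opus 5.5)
We argue purely by counting orbits and comparing kernels; no order relations are needed. Since the configuration is of Type~C and $|P|\le 13$, we have $m\in\{2,3\}$. This follows as in the proof of Lemma~\ref{lem:exist-uniquely-kerneled}, because $m\ge 4$ would force $|P|\ge 16$. Write $C^*$ for the given uniquely-kerneled Klein-type orbit, so $K_{C^*}=\langle\rho^2\rangle$. Unique kernelling forces every other $4$-orbit to have kernel different from $\langle\rho^2\rangle$. By Table~\ref{tab:subgroups}, each such orbit is therefore of cyclic type, with kernel $\langle\sigma\rangle$ or $\langle\sigma\rho^2\rangle$.

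\emph{Excluding $m=2$.} Suppose $m=2$, with the other $4$-orbit $D$. Then $K_D\ne K_{C^*}$, so $D$ is uniquely-kerneled, since its only competitor is $C^*$. But $D$ is of cyclic type, which contradicts the hypothesis. Hence $m=3$, and the two non-Klein orbits $C_1,C_2$ are both of cyclic type. If $K_{C_1}\ne K_{C_2}$, then $K_{C_1}$ differs from both $K_{C_2}$ and $K_{C^*}$, so $C_1$ would be a uniquely-kerneled cyclic orbit, again a contradiction. Therefore $K_{C_1}=K_{C_2}$ is a common cyclic kernel, which proves (i).

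\emph{Proof of (ii).} The three $4$-orbits account for $12$ points, so $|P|\in\{12,13\}$. At most one further point remains. A $2$-orbit or an $8$-orbit would need at least two more points, and a fourth $4$-orbit would need four. So the only possible extra orbit is a single fixed point, which gives the configurations $4+4+4$ and $4+4+4+1$. In particular, $P$ has no $2$-orbit.

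As a sanity check, faithfulness is consistent here: $K_{C^*}\cap K_{C_1}=\{e\}$ by (K1), so no additional constraint arises. The only step that needs care is the first one, namely checking that in the $m=2$ case the cyclic orbit really is uniquely-kerneled. This holds because a Klein-type kernel can never coincide with a cyclic-type kernel, by (K2) and Table~\ref{tab:subgroups}. The rest is bookkeeping.
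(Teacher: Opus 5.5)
Your proof is correct and follows essentially the same route as the paper. You observe that every $4$-orbit other than $C^*$ must be cyclic, rule out $m=2$ because the lone cyclic orbit would then be uniquely-kerneled, use the same uniquely-kerneled argument to force the two cyclic kernels to agree, and obtain (ii) by counting the at most one remaining point; the paper merely phrases the first step through the number $k$ of cyclic orbits, with $m=1+k$.
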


\begin{proof}
Let $k$ denote the number of cyclic-type $4$-orbits in $P$. Since $C^*$ is uniquely-kerneled, no other $4$-orbit has kernel $\langle \rho^2 \rangle = K_{C^*}$; hence every $4$-orbit other than $C^*$ is cyclic, and $m = 1 + k$.

\emph{Step 1: $k \ge 2$.} Suppose $k \le 1$. If $k = 0$, then $m = 1$, contradicting $m \ge 2$ (Type~C). If $k = 1$, the unique cyclic $4$-orbit $C'$ has $K_{C'} \in \{\langle \sigma \rangle, \langle \sigma\rho^2 \rangle\}$, distinct from $K_{C^*} = \langle \rho^2 \rangle$; and $C^*$ is the only other $4$-orbit, so $K_{C'}$ differs from every other $4$-orbit's kernel. Hence $C'$ is a cyclic-type uniquely-kerneled $4$-orbit, contradicting the hypothesis.

\emph{Step 2: $k = 2$.} Since $4m \le |P| \le 13$, $m \le 3$. As $m = 1 + k$, $k \le 2$; with Step~1, $k = 2$ and $m = 3$.

\emph{Step 3: The two cyclic $4$-orbits share a common kernel.} Both cyclic kernels lie in $\{\langle \sigma \rangle, \langle \sigma\rho^2 \rangle\}$. Suppose for contradiction the two cyclic orbits $C_1, C_2$ have distinct kernels, so $\{K_{C_1}, K_{C_2}\} = \{\langle \sigma \rangle, \langle \sigma\rho^2 \rangle\}$. The only other $4$-orbit is $C^*$ with $K_{C^*} = \langle \rho^2 \rangle$. Then $K_{C_1}$ differs from $K_{C_2}$ (hypothesis) and from $K_{C^*}$ (the three subgroups $\langle \sigma \rangle, \langle \sigma\rho^2 \rangle, \langle \rho^2 \rangle$ are pairwise distinct), so $C_1$ is uniquely-kerneled and cyclic, contradicting the hypothesis. Hence $K_{C_1} = K_{C_2}$.

\emph{Step 4: Configurations.} The three $4$-orbits contribute $12$ elements, so the remaining elements (a union of orbits of size $1$ or $2$) number $|P| - 12 \le 1$; in particular no orbit of size $2$ fits. Thus $|P| = 12$ (configuration $4+4+4$) or $|P| = 13$ (configuration $4+4+4+1$, one fixed point), and $P$ has no $2$-orbit.
\end{proof}

\subsection{The case of a uniquely-kerneled $4$-orbit}

\begin{theorem}\label{thm:strategy1}
Let $G$ act faithfully on $P$ with $|P| \le 13$ in a Type~C configuration possessing a uniquely-kerneled $4$-orbit. Then $\Aut(P) \supsetneq G$.
\end{theorem}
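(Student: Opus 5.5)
The plan is to fix a uniquely-kerneled $4$-orbit $C^*$ and build an automorphism $\tau$ supported on $C^*$. The map $\tau$ will be a transposition of the two elements of one class of the canonical partition $\{P_1^{C^*}, P_2^{C^*}\}$, fixing every other point of $P$.

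\textbf{Why such a $\tau$ works.} Since $C^*$ is an antichain (Lemma~\ref{lem:antichain}) and $\tau$ is the identity off $C^*$, the only relations that need checking are those between $C^*$ and $P\setminus C^*$. For these it suffices that $U_z^{C^*}$ and $D_z^{C^*}$ are $\tau$-invariant for every $z\notin C^*$. A transposition inside one partition class leaves each of $\emptyset$, $P_1^{C^*}$, $P_2^{C^*}$, $C^*$ invariant. So the task reduces to showing that every such neighborhood lies in $\{\emptyset, P_1^{C^*}, P_2^{C^*}, C^*\}$. Since we are in Type~C, there is no $8$-orbit. Hence every $z\notin C^*$ lies in another $4$-orbit, a $2$-orbit, or is a fixed point. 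The argument then splits according to Lemma~\ref{lem:klein-unique-config}.

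\textbf{Case 1: some uniquely-kerneled $4$-orbit is of cyclic type.} Take $C^*$ to be this orbit and set $\tau=(c_1\,c_3)$, exactly as in Lemma~\ref{lem:hiddeninv}.
\begin{itemize}
\item If $z$ lies in another $4$-orbit $C'$, then $K_{C'}\neq K_{C^*}$ by unique-kernelledness, so Lemma~\ref{lem:cross-kernel} gives the required four options.
\item If $z$ is a fixed point or lies in a $2$-orbit, then $\Stab_G(z)$ is $G$ or an order-$4$ subgroup, so Lemma~\ref{lem:stab-enum} applies.
\item $\tau\notin G$, because the image of $G$ in $\Sym(C^*)$ is the cyclic group generated by a $4$-cycle, which contains no transposition.
\end{itemize}

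\textbf{Case 2: the only uniquely-kerneled orbits are of Klein type.} Lemma~\ref{lem:klein-unique-config} then gives the configuration $4+4+4$ or $4+4+4+1$. Here $C^*$ is the Klein orbit, the other two $4$-orbits $C_1, C_2$ are cyclic with a common cyclic kernel, and there are no $2$-orbits. Take $\tau$ to swap $x$ and $\sigma x$ for some $x\in C^*$, fixing everything else; this is a transposition inside a class of the $\sigma$-partition.
\begin{itemize}
\item For $z\in C_1\cup C_2$, we have $K_{C_i}\ne \langle\rho^2\rangle = K_{C^*}$, so Lemma~\ref{lem:cross-kernel} (Klein case) shows the neighborhoods are unions of $\sigma$-classes.
\item For a fixed point $z$, the neighborhoods are $G$-invariant and hence equal to $\emptyset$ or $C^*$.
\item $\tau\notin G$, because $G/\langle\rho^2\rangle$ acts regularly on $C^*$, so no non-identity element of $G$ fixes a point of $C^*$, while $\tau$ fixes two.
\end{itemize}

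\textbf{Main obstacle.} The delicate point is Case~2. There, a $2$-orbit element with stabilizer $\langle\rho\rangle$ or $\langle\sigma\rho\rangle$ would act on $C^*$ through $\bar\rho$. Its neighborhood could then be a class of the $\bar\rho$-partition, which is not $\tau$-invariant, and the construction would fail. This is exactly why the configuration lemma is needed: it excludes $2$-orbits in this case. I would therefore check carefully that Lemma~\ref{lem:klein-unique-config} applies under the hypotheses of Case~2, namely that Type~C gives $m\ge2$, and that it rules out every orbit type other than $C_1$, $C_2$ and a possible fixed point.
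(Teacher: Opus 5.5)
Your proposal is correct and follows the paper's proof essentially step for step: you prefer a cyclic-type uniquely-kerneled orbit, use a transposition inside one class of the canonical partition, invoke Lemmas~\ref{lem:cross-kernel}, \ref{lem:stab-enum} and~\ref{lem:klein-unique-config} exactly as the paper does, and show $\tau \notin G$ for the same reason. One small wording fix: in Case~2, $\rho^2$ is a non-identity element of $G$ that fixes $C^*$ pointwise, so say instead that every element of $G$ acts on $C^*$ either trivially or without fixed points, which still rules out the transposition $\tau$.
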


\begin{proof}
Choose a uniquely-kerneled $4$-orbit $C^*$ as follows: if a cyclic-type uniquely-kerneled $4$-orbit exists, let $C^*$ be one such; otherwise let $C^*$ be a Klein-type uniquely-kerneled $4$-orbit (which then exists by hypothesis). Define $\tau \in \Sym(P)$ by $\tau|_{C^*} = \alpha$ and $\tau|_{P \setminus C^*} = \mathrm{id}$, with $\alpha \in \Sym(C^*)$ specified as follows.
\begin{enumerate}
\item[(i)] If $C^*$ is cyclic type, set $\alpha = (c_1\, c_3)$.
\item[(ii)] If $C^*$ is Klein type, then by the choice of $C^*$ no cyclic-type uniquely-kerneled $4$-orbit exists; the hypotheses of Lemma~\ref{lem:klein-unique-config} are therefore met, so the configuration is $4+4+4$ or $4+4+4+1$, the two other $4$-orbits are cyclic with a common kernel, and $P$ has no $2$-orbit. Writing the canonical partition as $\{P_1^{C^*}, P_2^{C^*}\} = \{\{u_0, u_1\}, \{v_0, v_1\}\}$, set $\alpha = (u_0\, u_1)$.
\end{enumerate}
In both cases $\alpha$ is an involution preserving $\{P_1^{C^*}, P_2^{C^*}\}$ setwise, fixing one class pointwise and exchanging the elements of the other.

We verify $\tau \in \Aut(P)$. Since $\tau = \mathrm{id}$ off $C^*$ and $C^*$ is an antichain, it suffices to show $\alpha$-invariance of $U_v^{C^*}$ and $D_v^{C^*}$ for every $v \in P \setminus C^*$. Consider the orbit of $v$.

If $v$ lies in another $4$-orbit $C'$, then $K_{C'} \ne K_{C^*}$ by unique-kerneledness, so Lemma~\ref{lem:cross-kernel} gives
\[
U_v^{C^*},\ D_v^{C^*} \in \{\emptyset, P_1^{C^*}, P_2^{C^*}, C^*\},
\]
all four $\alpha$-invariant.

If $v$ lies in a $2$-orbit, then $C^*$ is necessarily of cyclic type (by Lemma~\ref{lem:klein-unique-config}(ii), the Klein case admits no $2$-orbit), and $\Stab_G(v) \in \{\langle \rho \rangle, \langle \sigma\rho \rangle, \langle \sigma, \rho^2 \rangle\}$. Lemma~\ref{lem:stab-enum} gives $U_v^{C^*}, D_v^{C^*} \in \{\emptyset, P_1^{C^*}, P_2^{C^*}, C^*\}$, and all four candidate sets are preserved by $\alpha$, which fixes $\{P_1^{C^*}, P_2^{C^*}\}$ setwise.

If $v$ is a $G$-fixed point, $\Stab_G(v) = G$ acts transitively on $C^*$, so $U_v^{C^*}, D_v^{C^*} \in \{\emptyset, C^*\}$, both $\alpha$-invariant.

Thus $\tau \in \Aut(P)$. To see $\tau \notin G$: the image of $G$ in $\Sym(C^*)$ is $\langle \rho|_{C^*} \rangle$ in the cyclic case (sole non-identity involution $\rho^2|_{C^*} = (c_0\, c_2)(c_1\, c_3)$) and $V_4$ acting regularly in the Klein case (its three non-identity elements acting as double transpositions). The single transposition $\alpha$ lies in neither image. Hence $\Aut(P) \supsetneq G$.
\end{proof}

\subsection{The case $m = 2$ with $K_C = K_D$}

\begin{lemma}\label{lem:faithful-exclusion}
Suppose $G$ acts faithfully on a finite poset $P$ with $|P| \le 13$ in a Type~C configuration with $m = 2$ and $K_C = K_D$. Then:
\begin{enumerate}
\item[(i)] $K_C = K_D$ is of cyclic type;
\item[(ii)] at most one $2$-orbit $E$ in $P$ has $K_E = \langle \sigma, \rho^2 \rangle$.
\end{enumerate}
\end{lemma}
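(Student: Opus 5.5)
Both parts will follow from faithfulness via kernel intersections, using the size bound $|P| \le 13$ and the subgroup facts (K1), (K2). Write $K := K_C = K_D$. The plan is to show that, in each forbidden situation, some non-identity element of $G$ lies in every orbit kernel $K_F$. That element then acts trivially on $P$, contradicting faithfulness.

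\emph{Part (i).} Suppose $K = \langle \rho^2 \rangle$, the Klein case. Since $m = 2$ and there is no $8$-orbit (Type~C), every orbit other than $C$ and $D$ has size $1$ or $2$. The kernel of such an orbit is therefore $G$ or an order-$4$ subgroup, and by (K2) it contains $\langle \rho^2 \rangle$. This is the same reasoning as in Lemma~\ref{lem:typeD} and Step~1 of Lemma~\ref{lem:hiddeninv}. Hence $\rho^2$ lies in $K_C$, in $K_D$, and in every other kernel, so $\rho^2$ acts trivially on $P$. This contradicts faithfulness, so $K \in \{\langle\sigma\rangle, \langle\sigma\rho^2\rangle\}$ is of cyclic type.

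\emph{Part (ii).} The orbits $C$ and $D$ use $8$ points, leaving at most $5$ points for orbits of size $1$ or $2$. So there are at most two $2$-orbits, and the claim is only at risk when there are exactly two. Suppose both $2$-orbits $E_1, E_2$ have kernel $\langle\sigma,\rho^2\rangle$. By (i), $K$ is $\langle\sigma\rangle$ or $\langle\sigma\rho^2\rangle$, and in either case $K \subseteq \langle\sigma,\rho^2\rangle$. The remaining points, at most one, form a fixed point with kernel $G$. Therefore every orbit kernel contains $K$, which has order $2$, and this contradicts faithfulness. Hence at most one $2$-orbit has kernel $\langle \sigma,\rho^2\rangle$.

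I do not expect a real obstacle here. The only care needed is the counting step, checking that the leftover points after the two $2$-orbits form at most a single fixed point. It also helps to observe that any single orbit with kernel $\langle\sigma,\rho^2\rangle$ or $G$ contains $K$. As a consequence, the faithfulness argument in (ii) is driven by the two $2$-orbits together with the shared kernel $K$, and the number of such orbits plays no further role.
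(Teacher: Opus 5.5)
Your proof is correct and follows the paper's argument. Part (i) is the same (K2)-based observation that $\rho^2$ lies in every kernel, and part (ii) is the same kernel-intersection count with $C$, $D$, $E_1$, $E_2$ already using $12$ points. The differences are cosmetic: you skip the paper's reduction to $K=\langle\sigma\rangle$ by noting that both cyclic kernels lie in $\langle\sigma,\rho^2\rangle$. You also bound the leftover directly by a single fixed point, whereas the paper locates an orbit whose kernel misses $\sigma$ and deduces $|P|\ge 14$.
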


\begin{proof}
(i) If $K_C = K_D = \langle \rho^2 \rangle$, then $\rho^2$ acts trivially on $C$ and $D$. For any non-$4$-orbit $F$: if $F$ is a singleton, $K_F = G \ni \rho^2$; if $F$ is a $2$-orbit, $K_F$ is an order-$4$ subgroup, which contains $\rho^2$ by property~\hyperref[K2]{(K2)}. Hence $\rho^2 \in \bigcap_F K_F$, contradicting faithfulness (which requires $\bigcap_F K_F = \{e\}$). So $K_C = K_D$ is cyclic type.

(ii) Suppose $K_C = K_D = \langle \sigma \rangle$; the case $K_C = K_D = \langle \sigma\rho^2 \rangle$ follows by applying the automorphism of $G$ that fixes $\rho$ and sends $\sigma \mapsto \sigma\rho^2$, which swaps $\langle\sigma\rangle \leftrightarrow \langle\sigma\rho^2\rangle$ and fixes each of $\langle\rho^2\rangle$, $\langle\sigma,\rho^2\rangle$, $\langle\rho\rangle$, $\langle\sigma\rho\rangle$ setwise. For contradiction, assume two distinct $2$-orbits $E_1, E_2$ both have $K_{E_i} = \langle \sigma, \rho^2 \rangle$. Then $\sigma \in K_F$ for each $F \in \{C, D, E_1, E_2\}$, and these four orbits account for $4 + 4 + 2 + 2 = 12$ elements. Faithfulness forces $\bigcap_F K_F = \{e\}$ (intersection over all orbits), so some orbit $F^*$ has $\sigma \notin K_{F^*}$. We rule out every type for $F^*$ except a $2$-orbit: an $8$-orbit is excluded (Type~C); the only $4$-orbits are $C, D$, both with kernel $\langle \sigma \rangle \ni \sigma$; a singleton has kernel $G \ni \sigma$. So $F^*$ is a $2$-orbit with $K_{F^*} \in \{\langle \rho \rangle, \langle \sigma\rho \rangle\}$. Adjoining its two elements to the previous $12$ gives $|P| \ge 14$, contradicting $|P| \le 13$.
\end{proof}

\begin{lemma}\label{lem:pair-symmetry}
Let $C, D$ be cyclic-type $4$-orbits with $K_C = K_D$. Choose $\rho$-equivariant labelings $C = \{c_0, c_1, c_2, c_3\}$, $D = \{d_0, d_1, d_2, d_3\}$ (so $\rho \cdot c_k = c_{k+1}$ and $\rho \cdot d_k = d_{k+1}$, indices in $\mathbb{Z}_4$), and define the difference set
\[
S_{CD} := \{\,j - i \in \mathbb{Z}_4 : c_i < d_j\,\}.
\]
By $\rho$-equivariance, $c_i < d_j$ if and only if $j - i \in S_{CD}$. Define
\[
T(S_{CD}) := \{\,t \in \mathbb{Z}_4 : t - S_{CD} = S_{CD}\,\},
\]
the set of $t$ for which the reflection $x \mapsto t - x$ stabilizes $S_{CD}$. Then:
\begin{enumerate}
\item[(i)] $T(S_{CD}) = \mathbb{Z}_4$ if $|S_{CD}| \in \{0, 4\}$;
\item[(ii)] $T(S_{CD}) = \{0, 2\}$ if $S_{CD}$ is an opposite pair $\{a, a+2\}$;
\item[(iii)] $T(S_{CD}) = \{2a\}$ if $S_{CD} = \{a\}$ or $S_{CD} = \mathbb{Z}_4 \setminus \{a\}$ (so $T(S_{CD}) \subseteq \{0, 2\}$);
\item[(iv)] $T(S_{CD}) = \{a + b\} \subseteq \{1, 3\}$ if $S_{CD} = \{a, b\}$ is adjacent (i.e., $b - a \in \{1, 3\}$).
\end{enumerate}
In particular, $T(S_{CD})$ contains an even element unless $S_{CD}$ is adjacent.
\end{lemma}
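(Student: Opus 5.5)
The plan is first to justify the preliminary claim that $c_i < d_j$ depends only on $j-i$, and then to classify the subsets $S \subseteq \mathbb{Z}_4$ by size, computing $T(S)$ in each case.

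\textbf{Well-definedness of $S_{CD}$.} Since $\rho$ is an order-automorphism and the labelings are $\rho$-equivariant, $c_i < d_j$ holds if and only if $\rho^k c_i < \rho^k d_j$, that is, $c_{i+k} < d_{j+k}$. Hence the relation depends only on the difference $j-i$. This gives the stated characterization of $S_{CD}$, and shows that no further information about $K_C = K_D$ is needed here.

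\textbf{Reflections and the general fact.} The reflection $r_t\colon x \mapsto t-x$ is an involution of $\mathbb{Z}_4$, and it stabilizes a set $S$ if and only if it stabilizes the complement $\mathbb{Z}_4 \setminus S$. So the cases $|S|=3$ reduce to $|S|=1$. Similarly, once (i) is settled for $|S|=0$, it also covers $|S|=4$.

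\textbf{Case analysis.}
\begin{enumerate}
\item[(i)] If $|S| \in \{0,4\}$, then $S$ is $\emptyset$ or $\mathbb{Z}_4$, and both are stabilized by every $r_t$. So $T(S) = \mathbb{Z}_4$.
\item[(iii)] If $S = \{a\}$, then $r_t$ stabilizes $S$ exactly when $t-a=a$, i.e.\ $t=2a$. Since $2a \in \{0,2\}$, we get $T(S) = \{2a\} \subseteq \{0,2\}$. The complement case $S = \mathbb{Z}_4 \setminus \{a\}$ gives the same $T(S)$ by the general fact above.
\item[(ii), (iv)] If $S = \{a,b\}$ with $a \ne b$, then $r_t$ stabilizes $S$ exactly when either $t-a=a$ and $t-b=b$, or $t-a=b$ (and then automatically $t-b=a$). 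The first option requires $t = 2a = 2b$, i.e.\ $2(b-a)=0$, i.e.\ $b-a=2$. The second option gives $t=a+b$.
\begin{enumerate}
\item[$\bullet$] In the opposite case $b=a+2$, we have $2a = 2b = 2a$ and $a+b = 2a+2$. Since $2a \in \{0,2\}$, this yields $T(S) = \{2a, 2a+2\} = \{0,2\}$.
\item[$\bullet$] In the adjacent case $b-a$ is odd, so the first option never occurs and $T(S) = \{a+b\}$. Writing $a+b = 2a + (b-a)$, this is odd, so $T(S) \subseteq \{1,3\}$.
\end{enumerate}
\end{enumerate}

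\textbf{Final assertion.} These cases exhaust all subsets of $\mathbb{Z}_4$, since every $2$-subset is either opposite or adjacent. Reading off the list, $T(S)$ contains an even element in cases (i)--(iii) and fails to only in case (iv).

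There is no real obstacle in this argument. The only point needing care is the two-element case, where one must track both ways a reflection can stabilize a pair.
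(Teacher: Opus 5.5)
Your proof is correct and takes essentially the same approach as the paper, which simply checks $t - S_{CD} = S_{CD}$ directly for each $t \in \mathbb{Z}_4$ and each class of subsets. Your complement reduction and your analysis of the two ways a reflection can stabilize a pair are just a more systematic way of organizing that same direct verification.
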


\begin{proof}
Each part is a direct check of $t - S_{CD} = S_{CD}$ over $t \in \mathbb{Z}_4$. The parts are exhaustive: subsets of size $0$ or $4$ fall under (i); singletons and their complements under (iii); and the three two-element subsets are the opposite pairs $\{0,2\}$, $\{1,3\}$ (case (ii)) and the four adjacent pairs $\{0,1\},\{1,2\},\{2,3\},\{0,3\}$ (case (iv)). For the last assertion, the even elements of $\mathbb{Z}_4$ are $\{0,2\}$: in (i)--(iii) $T(S_{CD})$ meets $\{0,2\}$, while in (iv) $a+b$ is odd whenever $b - a \in \{1,3\}$.
\end{proof}

\begin{remark}\label{rem:labeling}
The difference set $S_{CD}$ depends on the chosen $\rho$-equivariant labelings only up to translation: re-indexing the origin of $C$ or of $D$ replaces $S_{CD}$ by $S' = S_{CD} + c$ for some $c \in \mathbb{Z}_4$. This replaces $T(S_{CD})$ by an \emph{even} translate: $t' \in T(S')$ unwinds, upon subtracting $c$ from both sides of $t' - (S_{CD}+c) = S_{CD}+c$, to $(t' - 2c) - S_{CD} = S_{CD}$, i.e., $T(S') = T(S_{CD}) + 2c$ with $2c \in \{0, 2\}$. Consequently the dichotomy used in Theorem~\ref{thm:strategy2}, namely whether $T(S_{CD})$ meets $\{0,2\}$ (Case~A there) or is contained in $\{1,3\}$ (Case~B), is independent of the labeling. This is the only labeling-independence the construction requires: the construction itself is carried out for one fixed labeling. We also note that $T(S_{CD})$ need not be a subgroup of $\mathbb{Z}_4$ (reflections compose to translations); we treat it purely as the set of valid reflection parameters.
\end{remark}

\begin{lemma}\label{lem:adjacency-obstruction}
Under the assumptions of Lemma~\ref{lem:pair-symmetry} with $S_{CD}$ adjacent, suppose $z \in P \setminus (C \cup D)$ has $\Stab_G(z) = \langle \sigma, \rho^2 \rangle$. Then:
\begin{enumerate}
\item[(i)] $U_z^C \in \{P_1^C, P_2^C\}$ implies $U_z^D = D$;
\item[(ii)] $D_z^D \in \{P_1^D, P_2^D\}$ implies $D_z^C = C$.
\end{enumerate}
\end{lemma}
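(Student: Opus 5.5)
The plan is to argue by transitivity through the cross-orbit relation encoded by $S_{CD}$. The stabilizer hypothesis $\Stab_G(z) = \langle \sigma, \rho^2 \rangle$ is not needed beyond the fact that it is what makes the premises $U_z^C \in \{P_1^C, P_2^C\}$ and $D_z^D \in \{P_1^D, P_2^D\}$ possible (Lemma~\ref{lem:stab-enum}). The key combinatorial input is this: if $S_{CD} = \{a, a+1\}$ is adjacent, then $S_{CD}$ and its translate $S_{CD} + 2 = \{a+2, a+3\}$ together cover $\mathbb{Z}_4$. For the orientation $S_{CD} = \{0,3\}$, which the statement also calls adjacent, we write it as $\{3, 3+1\}$ and the same covering holds. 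Equivalently, for any $x \in \mathbb{Z}_4$ the two sets $x + S_{CD}$ and $x + 2 + S_{CD}$ partition $\mathbb{Z}_4$, and so do $x - S_{CD}$ and $x + 2 - S_{CD}$.

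For (i), we fix the labeling used to define $S_{CD}$, so that $P_1^C = \{c_0, c_2\}$ and $P_2^C = \{c_1, c_3\}$. Assuming $U_z^C = \{c_i, c_{i+2}\}$ with $i \in \{0,1\}$, we have $z < c_i$ and $z < c_{i+2}$. By Lemma~\ref{lem:pair-symmetry}, $c_i < d_j$ whenever $j - i \in S_{CD}$, and $c_{i+2} < d_j$ whenever $j - i - 2 \in S_{CD}$. Hence, by transitivity, $z < d_j$ for every
\[
j \in (i + S_{CD}) \cup (i + 2 + S_{CD}) = \mathbb{Z}_4,
\]
which gives $U_z^D = D$.

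Part (ii) is the dual argument. Assuming $D_z^D = \{d_j, d_{j+2}\}$, every $c_i$ with $j - i \in S_{CD}$ lies below $d_j$, and every $c_i$ with $j + 2 - i \in S_{CD}$ lies below $d_{j+2}$. Hence, by transitivity, $c_i < z$ for all
\[
i \in (j - S_{CD}) \cup (j + 2 - S_{CD}) = \mathbb{Z}_4,
\]
which gives $D_z^C = C$.

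The only step needing care is the covering identity $S_{CD} \cup (S_{CD} + 2) = \mathbb{Z}_4$ for adjacent $S_{CD}$, together with its reflected form. This is a one-line check on the four adjacent pairs $\{0,1\}, \{1,2\}, \{2,3\}, \{0,3\}$. Since adjacency is translation-invariant (Remark~\ref{rem:labeling}), it is harmless that the partition classes $P_1, P_2$ are defined through the same $\rho$-equivariant labeling.
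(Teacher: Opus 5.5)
Your proof is correct and is essentially the paper's argument: transitivity through the two elements of the partition class, using that an adjacent $S_{CD}$ and its translate by $2$ together cover $\mathbb{Z}_4$. The only difference is that the paper writes out the $P_1$ case and calls $P_2$ analogous, while you handle both classes at once through the index $i$ (resp.\ $j$).
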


\begin{proof}
By Lemma~\ref{lem:equivariance}, $U_z^C, D_z^C \subseteq C$ and $U_z^D, D_z^D \subseteq D$ are $\langle \sigma, \rho^2 \rangle$-invariant; by Lemma~\ref{lem:stab-enum} each lies in $\{\emptyset, P_1^B, P_2^B, B\}$ for the corresponding orbit $B \in \{C, D\}$. Write $S_{CD} = \{a, a+1\}$; then each $c_i$ is below exactly $\{d_{i+a}, d_{i+a+1}\}$, and each $d_j$ has exactly $\{c_{j-a}, c_{j-a-1}\}$ below it.

(i) Suppose $U_z^C = P_1^C = \{c_0, c_2\}$ (the case $P_2^C$ is analogous). By transitivity,
\[
z < c_0 \implies z < d_a, d_{a+1}; \qquad z < c_2 \implies z < d_{a+2}, d_{a+3}.
\]
So $U_z^D \supseteq \{d_a, d_{a+1}, d_{a+2}, d_{a+3}\} = D$, hence $U_z^D = D$.

(ii) Suppose $D_z^D = P_1^D = \{d_0, d_2\}$ (the case $P_2^D$ is analogous). By transitivity, every $c_i$ below some $d_j \in D_z^D$ satisfies $c_i < z$. Now
\[
\{c_i : c_i < d_0\} = \{c_{-a-1}, c_{-a}\}, \qquad \{c_i : c_i < d_2\} = \{c_{1-a}, c_{2-a}\},
\]
whose union has index set $\{-a-1, -a, -a+1, -a+2\}$, a translate of $\{0,1,2,3\}$ and hence all of $\mathbb{Z}_4$. So $D_z^C \supseteq C$, giving $D_z^C = C$.
\end{proof}

\begin{lemma}\label{lem:mutex}
Under the assumptions of Lemma~\ref{lem:pair-symmetry} with $S_{CD}$ adjacent, suppose $z \in P \setminus (C \cup D)$ has $\Stab_G(z) = \langle \sigma, \rho^2 \rangle$. Say $C$ is \emph{constrained at $z$} if $U_z^C \in \{P_1^C, P_2^C\}$ or $D_z^C \in \{P_1^C, P_2^C\}$, and analogously for $D$. Then $C$ and $D$ cannot both be constrained at $z$.
\end{lemma}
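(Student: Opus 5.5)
The plan is a case split on \emph{how} each of $C$ and $D$ is constrained at $z$. For each orbit this is by an upper neighborhood or by a lower neighborhood, giving four combinations. Each combination is excluded using Lemma~\ref{lem:adjacency-obstruction}, transitivity, and the fact that $C$ and $D$ are antichains (Lemma~\ref{lem:antichain}). Throughout, write $S_{CD} = \{a, a+1\}$, so that $c_i < d_j$ iff $j - i \in \{a, a+1\}$. By Lemma~\ref{lem:stab-enum}, each of $U_z^C, D_z^C, U_z^D, D_z^D$ lies in $\{\emptyset, P_1, P_2, \text{whole orbit}\}$ of the relevant orbit.

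\emph{Case 1: $U_z^C \in \{P_1^C, P_2^C\}$ and $U_z^D \in \{P_1^D, P_2^D\}$.} Lemma~\ref{lem:adjacency-obstruction}(i) forces $U_z^D = D$, which is not a partition class.

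\emph{Case 2: $D_z^C \in \{P_1^C,P_2^C\}$ and $D_z^D \in \{P_1^D,P_2^D\}$.} Lemma~\ref{lem:adjacency-obstruction}(ii) forces $D_z^C = C$, a contradiction.

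\emph{Case 3: $U_z^C$ and $D_z^D$ are both partition classes.} Then both are nonempty. Pick $c_i$ with $z < c_i$ and $d_j$ with $d_j < z$. Every $c_i$ lies below some element of $D$, namely $d_{i+a}$. Transitivity then gives $d_j < z < c_i < d_{i+a}$, so $d_j < d_{i+a}$. This contradicts Lemma~\ref{lem:antichain}, since $D$ is an orbit. In fact this only uses $U_z^C \ne \emptyset$ and $D_z^D \ne \emptyset$.

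\emph{Case 4: $D_z^C$ and $U_z^D$ are both partition classes.} This is the step I expect to be the only one needing an actual use of adjacency beyond Lemma~\ref{lem:adjacency-obstruction}. Write $D_z^C = \{c_i, c_{i+2}\}$ and $U_z^D = \{d_j, d_{j+2}\}$. Transitivity through $z$ gives $c_i < d_j$ and $c_i < d_{j+2}$. Hence both $j - i$ and $j - i + 2$ lie in $S_{CD}$, so $S_{CD}$ contains an opposite pair. This is impossible for the adjacent two-element set $\{a, a+1\}$, whose two elements differ by $1$.

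These four cases exhaust the ways in which $C$ and $D$ could both be constrained, so the lemma follows. The main care needed is bookkeeping rather than any real difficulty. One must check that Cases~1 and~2 genuinely use the two halves of Lemma~\ref{lem:adjacency-obstruction}, and that Case~4 is exactly where adjacency of $S_{CD}$ (rather than an opposite pair) is essential. If $S_{CD}$ were an opposite pair, Case~4 would go through consistently, which explains why the lemma is stated only in the adjacent case.
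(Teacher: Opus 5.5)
Your proof is correct and follows the paper's four-case split (UU, DD, UD, DU) essentially verbatim. The one deviation is your Case~3: the paper applies both halves of Lemma~\ref{lem:adjacency-obstruction} to get $D_z^C = C$ and hence $z < c < z$, whereas you use $S_{CD} \neq \emptyset$ to get $d_j < z < c_i < d_{i+a}$ inside the antichain $D$ (when $j = i+a$ the contradiction comes from irreflexivity rather than the antichain property), which is equally valid and slightly more economical. One small correction to your closing remark: adjacency is essential in Cases~1 and~2 as well, because the conclusion of Lemma~\ref{lem:adjacency-obstruction} can fail when $S_{CD}$ is an opposite pair.
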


\begin{proof}
Suppose both are constrained. Then one of $U_z^C, D_z^C$ lies in $\{P_1^C, P_2^C\}$ and one of $U_z^D, D_z^D$ lies in $\{P_1^D, P_2^D\}$, giving four cases.

\emph{(UU).} $U_z^C \in \{P_1^C, P_2^C\}$ forces $U_z^D = D$ (Lemma~\ref{lem:adjacency-obstruction}(i)), contradicting $U_z^D \in \{P_1^D, P_2^D\}$.

\emph{(DD).} $D_z^D \in \{P_1^D, P_2^D\}$ forces $D_z^C = C$ (Lemma~\ref{lem:adjacency-obstruction}(ii)), contradicting $D_z^C \in \{P_1^C, P_2^C\}$.

\emph{(UD).} $U_z^C \in \{P_1^C, P_2^C\}$ gives $U_z^D = D$; $D_z^D \in \{P_1^D, P_2^D\}$ gives $D_z^C = C$. Pick $c \in U_z^C$ (nonempty, $|U_z^C| = 2$): then $z < c$, while $D_z^C = C$ gives $c < z$, contradicting irreflexivity.

\emph{(DU).} $D_z^C \in \{P_1^C, P_2^C\}$ and $U_z^D \in \{P_1^D, P_2^D\}$, each of size $2$. For every $(c, d) \in D_z^C \times U_z^D$, transitivity gives $c < z < d$, hence $c < d$, contributing $j - i$ to $S_{CD}$. The four sub-cases:
$D_z^C = P_1^C, U_z^D = P_1^D$ gives differences $\{0,2,2,0\} \pmod{4} = \{0,2\}$;
$P_1^C, P_2^D$ gives $\{1,3,3,1\} \pmod{4} = \{1,3\}$;
$P_2^C, P_1^D$ gives $\{3,1,1,3\} \pmod{4} = \{1,3\}$;
$P_2^C, P_2^D$ gives $\{0,2\}$.
In each, the difference set $\{0,2\}$ or $\{1,3\}$ is contained in $S_{CD}$; as $|S_{CD}| = 2$, this forces $S_{CD} \in \{\{0,2\}, \{1,3\}\}$, neither adjacent, contradicting adjacency of $S_{CD}$.

All four cases are contradictory.
\end{proof}

\begin{remark}\label{rem:mirror}
Lemmas~\ref{lem:adjacency-obstruction} and~\ref{lem:mutex} are stated for the difference set $S_{CD}$ and cross-relations of the form $c < d$. The mirror statements, obtained for $S_{DC} := \{\,i - j \in \mathbb{Z}_4 : d_j < c_i\,\}$ and cross-relations $d < c$, hold as well: interchanging the roles of $C$ and $D$ throughout transposes the statements and proofs verbatim. We appeal to these mirror versions when relabeling $C$ and $D$ in Theorem~\ref{thm:strategy2}.
\end{remark}

\begin{lemma}\label{lem:cd-disjoint}
Let $C, D$ be distinct cyclic-type $4$-orbits with $\rho$-equivariant labelings as in Lemma~\ref{lem:pair-symmetry}, and set $S_{DC} := \{\,i - j \in \mathbb{Z}_4 : d_j < c_i\,\}$. Then at most one of $S_{CD}$, $S_{DC}$ is non-empty.
\end{lemma}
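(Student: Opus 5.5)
Suppose for contradiction that both $S_{CD}$ and $S_{DC}$ are non-empty. Then there are indices with $c_i < d_j$ and $d_{j'} < c_{i'}$. The plan is to use $\rho$-equivariance to move these two relations into a position where transitivity closes a cycle inside the antichain $C$, contradicting Lemma~\ref{lem:antichain}.

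First, from $d_{j'} < c_{i'}$, apply $\rho^{k}$ with $k = j - j'$. This gives $d_j < c_{i'+k}$. Combining with $c_i < d_j$ yields $c_i < d_j < c_{i'+k}$, hence $c_i < c_{i'+k}$ by transitivity.

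Both elements $c_i$ and $c_{i'+k}$ lie in the single $G$-orbit $C$. By Lemma~\ref{lem:antichain}, $C$ is an antichain, so no two of its elements are comparable. If $i'+k \ne i$, the relation $c_i < c_{i'+k}$ contradicts this directly. If $i'+k = i$, we get $c_i < c_i$, which contradicts irreflexivity. In either case the assumption fails, so at most one of $S_{CD}$ and $S_{DC}$ is non-empty.

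Only $\rho$-equivariance of the labelings is needed here. The hypotheses that the orbits are of cyclic type and share a kernel are used only to make sense of the labeling.

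There is no genuine obstacle. The one point to check is the index bookkeeping for $\rho^{k}$ acting on both labelings simultaneously, namely $\rho^{k} d_{j'} = d_{j'+k}$ and $\rho^{k} c_{i'} = c_{i'+k}$, so that the middle element is exactly $d_j$.
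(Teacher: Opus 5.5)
Your proof is correct and is essentially the paper's own argument: the paper normalizes the two relations to $c_0 < d_s$ and $d_0 < c_r$, applies $\rho^s$ to get $d_s < c_{r+s}$, and concludes $c_0 < c_{r+s}$, contradicting the antichain property (or irreflexivity when $r+s=0$), exactly as you do with general indices and $k = j - j'$. One small correction to your closing remark: the shared-kernel hypothesis plays no role at all, not even in setting up the labeling, since cyclic type alone gives the $\rho$-equivariant labelings.
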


\begin{proof}
If both are non-empty, $\rho$-equivariance gives $s, r$ with $c_0 < d_s$ and $d_0 < c_r$; applying $\rho^s$ to the second gives $d_s < c_{r+s}$, and transitivity yields $c_0 < c_{r+s}$. If $r + s \ne 0$ this is a strict relation between two distinct elements of $C$, contradicting the antichain property (Lemma~\ref{lem:antichain}); if $r + s = 0$ it reads $c_0 < c_0$, contradicting irreflexivity. Either way, a contradiction.
\end{proof}

\begin{theorem}\label{thm:strategy2}
Let $G$ act faithfully on a finite poset $P$ with $|P| \le 13$, with no orbit of size $8$, exactly two orbits $C, D$ of size $4$, and $K_C = K_D$. Then $\Aut(P) \supsetneq G$.
\end{theorem}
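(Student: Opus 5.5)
The plan is to build the extra automorphism $\tau$ on $C \cup D$ \emph{simultaneously}, as a pair of reflections of the two $4$-cycles, and to let $\tau$ fix every point outside $C \cup D$.

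\textbf{Setup.} By Lemma~\ref{lem:faithful-exclusion}(i), the common kernel $K_C = K_D$ is of cyclic type. Applying the automorphism of $G$ used in the proof of Lemma~\ref{lem:faithful-exclusion}(ii), we may assume $K_C = K_D = \langle\sigma\rangle$. We fix $\rho$-equivariant labelings of $C$ and $D$ as in Lemma~\ref{lem:pair-symmetry}. By Lemma~\ref{lem:cd-disjoint}, at most one of $S_{CD}$, $S_{DC}$ is non-empty. Interchanging the roles of $C$ and $D$ if necessary (Remark~\ref{rem:mirror}), we may assume every cross-relation has the form $c_i < d_j$ with $j-i \in S := S_{CD}$.

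Every point $z \notin C \cup D$ lies in an orbit of size $1$ or $2$. By Lemma~\ref{lem:stab-enum}, $U_z^B, D_z^B \in \{\emptyset, B\}$ for $B \in \{C,D\}$ unless $\Stab_G(z) = \langle\sigma,\rho^2\rangle$. By Lemma~\ref{lem:faithful-exclusion}(ii), at most one $2$-orbit $E$ has that stabilizer. For $z \in E$ the sets lie in $\{\emptyset, P_1^B, P_2^B, B\}$, where $P_1^B, P_2^B$ are the parity classes of indices. The points of $E$ all behave alike, since $\rho$ preserves parity classes and $U_{\rho z}^B = \rho\, U_z^B$.

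\textbf{Construction.} For parameters $s, t \in \mathbb{Z}_4$, define $\tau(c_i) = c_{s-i}$, $\tau(d_j) = d_{t-j}$, and let $\tau$ fix $P \setminus (C \cup D)$. Relations within $C$, within $D$, and within the complement are trivially preserved, since orbits are antichains (Lemma~\ref{lem:antichain}). A cross-relation $c_i < d_j$ is sent to one with difference $(t-j)-(s-i) = (t-s) - (j-i)$. So the $C$--$D$ relations are preserved exactly when $t - s \in T(S)$.

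Relations between $C \cup D$ and a point $z$ with $U,D \in \{\emptyset,\text{full}\}$ are automatically preserved. For $z \in E$ we need $\tau$ to preserve the relevant parity classes. The reflection $i \mapsto s-i$ preserves each parity class iff $s$ is even, and swaps them iff $s$ is odd. Hence it suffices that $\tau$ preserves parity on every orbit among $C, D$ that is \emph{constrained} at $z$ in the sense of Lemma~\ref{lem:mutex}.

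\textbf{Case A: $T(S)$ meets $\{0,2\}$.} This happens whenever $S$ is not adjacent, by Lemma~\ref{lem:pair-symmetry}, and the dichotomy is labeling-independent by Remark~\ref{rem:labeling}. Take $s = 0$ and $t \in T(S)$ even. Then $\tau$ preserves parity classes on both $C$ and $D$, so all four candidate neighbourhood sets are invariant.

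\textbf{Case B: $S$ adjacent, so $T(S) = \{t_0\}$ with $t_0$ odd.} If $E$ does not exist, any choice works. Otherwise Lemma~\ref{lem:mutex} shows that $C$ and $D$ are not both constrained at points of $E$. If $D$ is unconstrained, take $s = 0$, $t = t_0$. If $C$ is unconstrained, take $t = 0$, $s = -t_0$. In both choices $t - s = t_0 \in T(S)$, and parity is preserved exactly on the possibly constrained orbit.

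\textbf{Showing $\tau \notin G$.} On $C$, the map $\tau$ is a reflection $c_i \mapsto c_{s-i}$. Its candidates are the transposition $(c_1c_3)$ for $s=0$, $(c_0c_2)$ for $s=2$, or the double transposition $(c_0c_s)(c_{s+1}c_{s+2})$ for $s$ odd. The image of $G$ in $\Sym(C)$ is $\langle(c_0c_1c_2c_3)\rangle$, whose only involution is $(c_0c_2)(c_1c_3)$, and no reflection equals it. Therefore $\tau|_C$ lies outside the image of $G$, and $\Aut(P) \supsetneq G$.

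\textbf{Main obstacle.} The delicate step is Case~B. There the odd reflection parameter forces a parity swap on one orbit, and this is compatible with the constraints from $E$ only because of the mutual-exclusion Lemma~\ref{lem:mutex} together with the uniqueness of $E$ from Lemma~\ref{lem:faithful-exclusion}(ii). I would also double-check that the reduction to $S_{DC} = \emptyset$ via Remark~\ref{rem:mirror} is compatible with the choice of which orbit gets the parity-preserving reflection.
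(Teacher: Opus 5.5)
Your proposal is correct and follows the paper's proof essentially step for step. It uses the same pair of reflections with cross-condition $t-s\in T(S_{CD})$, the same parity constraints from the unique $2$-orbit with stabilizer $\langle\sigma,\rho^2\rangle$, and Lemma~\ref{lem:mutex} to put the odd reflection on an unconstrained orbit in the adjacent case; your $s=-t_0$ is exactly the paper's $t_C=3-2q$.

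There are two cosmetic slips, neither affecting the argument. First, $\rho$ swaps the two parity classes rather than preserving each; it does preserve the partition, which is all you need. Second, for $s=3$ your formula $(c_0c_s)(c_{s+1}c_{s+2})$ should read $(c_0c_3)(c_1c_2)$.
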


\begin{proof}
By Lemma~\ref{lem:faithful-exclusion}(i), $K_C = K_D \in \{\langle \sigma \rangle, \langle \sigma\rho^2 \rangle\}$ is cyclic type; in particular both $C$ and $D$ are cyclic-type orbits, and we fix $\rho$-equivariant labelings as in Lemma~\ref{lem:pair-symmetry}. By Lemma~\ref{lem:cd-disjoint}, at most one of $S_{CD}, S_{DC}$ is non-empty. The construction below and all its conditions are symmetric under interchanging $C \leftrightarrow D$ (swapping $t_C \leftrightarrow t_D$ and $S_{CD} \leftrightarrow S_{DC}$, and invoking the mirror versions of Lemmas~\ref{lem:adjacency-obstruction} and~\ref{lem:mutex} recorded in Remark~\ref{rem:mirror}), so after relabeling if necessary we may assume $S_{DC} = \emptyset$. (If both are empty the relabeling is vacuous and the argument below applies unchanged with $S_{CD} = \emptyset$, which falls under case (i) of Lemma~\ref{lem:pair-symmetry}.) Thus the only cross-orbit comparabilities are $c < d$, recorded by $S_{CD}$.

Note that since $m = 2$ and there is no $8$-orbit, every $v \in P \setminus (C \cup D)$ lies in an orbit of size $1$ or $2$; its stabilizer is therefore $G$ or one of the three order-$4$ subgroups $\langle\rho\rangle$, $\langle\sigma\rho\rangle$, $\langle\sigma,\rho^2\rangle$, so Lemma~\ref{lem:stab-enum} applies to $v$ with respect to each of the cyclic-type orbits $C$ and $D$.

Construct $\tau \in \Sym(P)$ by $\tau|_C = \alpha_C$, $\tau|_D = \alpha_D$, $\tau|_{P \setminus (C \cup D)} = \mathrm{id}$, where $\alpha_C(x) = t_C - x$ and $\alpha_D(y) = t_D - y$ for parameters $t_C, t_D \in \mathbb{Z}_4$ chosen below. Each of $\alpha_C, \alpha_D$ is an involution, so $\tau$ is a bijection and it suffices to check that $\tau$ preserves comparability, i.e., the two conditions:
\begin{itemize}
\item[(C1)] $(\alpha_C \times \alpha_D)$ preserves $\{(c, d) : c < d\}$, equivalently $t_D - t_C \in T(S_{CD})$ (Lemma~\ref{lem:pair-symmetry}); as $S_{DC} = \emptyset$, there are no $d < c$ relations, so this is the complete cross-orbit condition;
\item[(C2)] for each $v \in P \setminus (C \cup D)$, $\alpha_C$ preserves $U_v^C, D_v^C$ and $\alpha_D$ preserves $U_v^D, D_v^D$.
\end{itemize}

To see the equivalence asserted in (C1): the pair $(c_i, d_j)$ is sent by $\alpha_C \times \alpha_D$ to $(c_{t_C - i}, d_{t_D - j})$, whose index difference is $(t_D - j) - (t_C - i) = (t_D - t_C) - (j - i)$. Thus $\alpha_C \times \alpha_D$ maps the relation with difference $\delta$ to the relation with difference $(t_D - t_C) - \delta$, and it preserves $\{(c,d) : c < d\}$ precisely when the reflection $\delta \mapsto (t_D - t_C) - \delta$ stabilizes $S_{CD}$, i.e., when $t_D - t_C \in T(S_{CD})$ (Lemma~\ref{lem:pair-symmetry}).

A direct computation shows $x \mapsto t - x$ fixes each of the classes $P_1^C, P_2^C$ iff $t$ is even (and swaps them iff $t$ is odd). Since $x \mapsto t-x$ is a bijection of $C$, it always preserves $\emptyset$ and $C$; thus (C2) on $C$ is automatic unless some $U_v^C$ or $D_v^C$ actually equals $P_1^C$ or $P_2^C$, in which case it amounts to requiring $t_C$ even. Accordingly, call $\alpha_C$ \emph{constrained} if some $v \in P \setminus (C \cup D)$ has $U_v^C \in \{P_1^C, P_2^C\}$ or $D_v^C \in \{P_1^C, P_2^C\}$, and define $\alpha_D$ \emph{constrained} analogously. With this terminology, (C2) holds if and only if $t_C$ is even whenever $\alpha_C$ is constrained, and $t_D$ is even whenever $\alpha_D$ is constrained.

By Lemma~\ref{lem:stab-enum}, $U_v^C, D_v^C \in \{\emptyset, P_1^C, P_2^C, C\}$ for every $v \in P \setminus (C \cup D)$, with the partition values possible only when $\Stab_G(v) = \langle \sigma, \rho^2 \rangle$; the same holds with $D$ in place of $C$. By Lemma~\ref{lem:faithful-exclusion}(ii), at most one $2$-orbit $E_*$ has stabilizer $\langle \sigma, \rho^2 \rangle$. Hence every witness $v$ to the constraint of $\alpha_C$ or of $\alpha_D$ lies in $E_*$; in particular, if $E_*$ does not exist, then neither $\alpha_C$ nor $\alpha_D$ is constrained.

Suppose now that $E_*$ exists and fix $z_* \in E_*$. We claim that $\alpha_C$ is constrained if and only if $C$ is constrained \emph{at $z_*$} in the sense of Lemma~\ref{lem:mutex}, and similarly for $\alpha_D$ and $D$; that is, the global notion can be tested at the single point $z_*$. Indeed, the only other element of $E_*$ is $g \cdot z_*$ for any $g \in G \setminus \Stab_G(z_*)$, and Lemma~\ref{lem:equivariance} gives $U_{g z_*}^{C} = g \cdot U_{z_*}^{C}$ and $D_{g z_*}^{C} = g \cdot D_{z_*}^{C}$. Every $g \in G$ acts on $C$ through the translation group $\langle \rho|_C \rangle$, and a translation of $\mathbb{Z}_4$ carries the canonical partition $\{P_1^C, P_2^C\}$ to itself setwise (fixing or swapping the two classes according to the parity of the translation). Hence $U_{g z_*}^{C} \in \{P_1^C, P_2^C\}$ if and only if $U_{z_*}^{C} \in \{P_1^C, P_2^C\}$, and likewise for $D_{\cdot}^{C}$; so a witness exists at one point of $E_*$ if and only if one exists at the other, proving the claim. The same argument applies verbatim to $D$.

By Lemma~\ref{lem:pair-symmetry}, $T(S_{CD})$ contains an even element unless $S_{CD}$ is adjacent.

\emph{Case A: $T(S_{CD}) \cap \{0,2\} \ne \emptyset$.} Take $t_C = 0$ and $t_D \in T(S_{CD}) \cap \{0, 2\}$; both even, so (C2) holds automatically and (C1) holds by construction.

\emph{Case B: $T(S_{CD}) \subseteq \{1, 3\}$.} Then $S_{CD}$ is adjacent and $T(S_{CD}) = \{2q + 1\}$ for some $q \in \mathbb{Z}_4$, so $t_D - t_C$ must be the odd value $2q+1$; exactly one of $t_C, t_D$ is even.

\emph{(I) $E_*$ does not exist.} Neither $\alpha_C$ nor $\alpha_D$ is constrained, as noted above. Take $(t_C, t_D) = (0, 2q + 1)$: (C1) and (C2) hold.

\emph{(II) $E_*$ exists.} By the single-point criterion above and Lemma~\ref{lem:mutex} (whose adjacency hypothesis holds in Case~B), $C$ and $D$ are not both constrained at $z_*$, so at most one of $\alpha_C, \alpha_D$ is constrained.

\emph{(II.a) Neither constrained.} Take $(t_C, t_D) = (0, 2q + 1)$ as in (I).

\emph{(II.b) $\alpha_C$ constrained, $\alpha_D$ not.} Take $(t_C, t_D) = (0, 2q + 1)$: $t_C = 0$ is even (satisfying $\alpha_C$'s constraint), $\alpha_D$ is unconstrained, and $t_D - t_C = 2q + 1 \in T(S_{CD})$.

\emph{(II.c) $\alpha_D$ constrained, $\alpha_C$ not.} Take $(t_C, t_D) = (3 - 2q, 0)$. Then $t_D = 0$ is even (satisfying $\alpha_D$'s constraint). For the parity of $t_C$: since $2q$ is even and $3$ is odd, $3 - 2q$ is always odd, so $t_C$ is always odd and $\alpha_C$ (unconstrained) has no parity requirement to violate. For (C1):
\[
t_D - t_C = -(3 - 2q) = 2q - 3 \equiv 2q + 1 \pmod 4,
\]
so $t_D - t_C \in T(S_{CD})$. Hence (C1) and (C2) hold.

It remains to verify $\tau \notin G$. The image of $G$ in $\Sym(C)$ consists entirely of translations $c_k \mapsto c_{k+\ell}$, $\ell \in \mathbb{Z}_4$, since $G/K_C \cong \mathbb{Z}_4$ acts regularly on $C$. If $\alpha_C = \rho^\ell|_C$, then $t_C - x = x + \ell$ for all $x$, i.e., $2x = t_C - \ell$ for all $x \in \mathbb{Z}_4$, impossible (the left side takes only two values, $0$ and $2$, in $\mathbb{Z}_4$). So $\alpha_C$ is not in the image of $G$ on $C$; but $\tau \in G$ would force $\tau|_C$ into that image. Hence $\tau \notin G$ and $\Aut(P) \supsetneq G$.
\end{proof}

\subsection{Conclusion of the Type~C analysis}

\begin{theorem}\label{thm:typeC-main}
Let $G$ act faithfully on a finite poset $P$ with $|P| \le 13$, with at least two $4$-orbits and no $8$-orbit. Then $\Aut(P) \supsetneq G$.
\end{theorem}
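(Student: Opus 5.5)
The theorem follows by combining the dichotomy of Lemma~\ref{lem:exist-uniquely-kerneled} with Theorems~\ref{thm:strategy1} and~\ref{thm:strategy2}. The hypotheses say we are in a Type~C configuration: there is no $8$-orbit and $m \ge 2$. Since $4m \le |P| \le 13$, we also have $m \in \{2,3\}$.

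\emph{Step 1.} Suppose $P$ possesses a uniquely-kerneled $4$-orbit in the sense of Definition~\ref{def:uniquely-kerneled}. Then all hypotheses of Theorem~\ref{thm:strategy1} hold: faithfulness, $|P| \le 13$, Type~C, and existence of such an orbit. That theorem gives $\Aut(P) \supsetneq G$ directly. Its internal split into cyclic and Klein type, with the Klein case handled via Lemma~\ref{lem:klein-unique-config}, is already carried out there and need not be revisited.

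\emph{Step 2.} Otherwise, no uniquely-kerneled $4$-orbit exists. By Lemma~\ref{lem:exist-uniquely-kerneled} this forces $m = 2$ and $K_C = K_D$ for the two $4$-orbits $C, D$. These are exactly the hypotheses of Theorem~\ref{thm:strategy2}: faithful action, $|P| \le 13$, no $8$-orbit, exactly two $4$-orbits, and equal kernels. That theorem again yields $\Aut(P) \supsetneq G$.

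The two cases are exhaustive, so the conclusion holds in general. The only point needing care is checking that each invoked result's hypotheses match precisely. In particular, ``exactly two orbits of size $4$'' in Theorem~\ref{thm:strategy2} is supplied by $m=2$ from Lemma~\ref{lem:exist-uniquely-kerneled}, and the Type~C standing assumption (no $8$-orbit, $m\ge2$) is what both earlier results require. There is no substantive obstacle, since all the combinatorial work lives in the two cited theorems.
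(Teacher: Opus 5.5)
Your proposal is correct and matches the paper's proof: Lemma~\ref{lem:exist-uniquely-kerneled} splits the Type~C situation into the uniquely-kerneled case, handled by Theorem~\ref{thm:strategy1}, and the case $m=2$ with $K_C = K_D$, handled by Theorem~\ref{thm:strategy2}. Your hypothesis checks for the two cited theorems are accurate; the observation $m \in \{2,3\}$ is harmless but not needed.
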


\begin{proof}
Let $m \ge 2$ be the number of $4$-orbits. By Lemma~\ref{lem:exist-uniquely-kerneled}, either a uniquely-kerneled $4$-orbit exists, or $m = 2$ with $K_C = K_D$. The former is handled by Theorem~\ref{thm:strategy1}, the latter by Theorem~\ref{thm:strategy2}; in both an element of $\Aut(P) \setminus G$ is produced.
\end{proof}

\section{The lower bound}\label{sec:lower}

\begin{theorem}\label{thm:lower}
For all $n \le 13$, no $n$-element poset $P$ has $\Aut(P) \cong G$.
\end{theorem}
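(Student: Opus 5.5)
The plan is a proof by contradiction that assembles the type analyses of Sections~\ref{sec:smalltypes} and~\ref{sec:typeC}. Suppose $P$ is a poset with $|P| = n \le 13$ and $\Aut(P) \cong G$. Fix an isomorphism and regard $G$ as a subgroup of $\Aut(P) \subseteq \Sym(P)$. This gives a faithful action of $G$ on $P$ by order-automorphisms with $G = \Aut(P)$. In each case below the goal is either to exhibit an automorphism outside $G$, which forces $|\Aut(P)| \ge 9$, or to show that $\Aut(P)$ is too large.

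First I dispose of small $n$. If $n \le 4$, Lemma~\ref{lem:S4-embed} says $G$ does not embed in $\Sym(P) \cong S_n$, which contradicts faithfulness. This step is not strictly needed, since the type analysis below covers it, but it is a quick sanity check.

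In general, every orbit has size in $\{1,2,4,8\}$, and by the remark in the Notation subsection the action falls into exactly one of Types~A--D.

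\emph{Type~D.} Lemma~\ref{lem:typeD} shows $\rho^2$ acts trivially, contradicting faithfulness.

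\emph{Type~A.} If the $8$-orbit is all of $P$, Lemma~\ref{lem:pure-8orbit} gives $\Aut(P) \cong S_8 \not\cong G$. Otherwise there is a further orbit, and Lemma~\ref{lem:8orbit} gives $\Aut(P) \supsetneq G$.

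\emph{Type~B.} Here there is exactly one $4$-orbit and no $8$-orbit, so all other orbits have size at most $2$. Lemma~\ref{lem:hiddeninv} then supplies $\tau \in \Aut(P) \setminus G$.

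\emph{Type~C.} Theorem~\ref{thm:typeC-main} gives $\Aut(P) \supsetneq G$.

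In every case where an extra automorphism appears, $\Aut(P)$ properly contains the order-$8$ subgroup $G$. Hence $|\Aut(P)| > 8$, so $\Aut(P) \not\cong G$, which contradicts the assumption.

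The only point needing care is the bookkeeping: the hypotheses of each cited result must match the case exactly. Type~B has to supply the "all other orbits of size $\le 2$" hypothesis of Lemma~\ref{lem:hiddeninv}, which follows from the absence of $8$-orbits and the presence of exactly one $4$-orbit. Type~A has to split according to whether $P$ equals the $8$-orbit. The identification $G = \Aut(P)$, as opposed to a mere isomorphism, must also be stated explicitly so that "$\Aut(P) \supsetneq G$" contradicts it by cardinality. The substantive work is already done in the earlier results, so I expect no real obstacle here.
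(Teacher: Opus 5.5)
Your proposal is correct and follows the paper's proof essentially verbatim: you dispatch $n \le 4$ via Lemma~\ref{lem:S4-embed}, split into Types~A--D using the same results (Lemma~\ref{lem:typeD}, Lemmas~\ref{lem:pure-8orbit} and~\ref{lem:8orbit}, Lemma~\ref{lem:hiddeninv}, Theorem~\ref{thm:typeC-main}), and reach the same contradiction $|\Aut(P)| \ge 9 > 8$. Your side remark that the $n \le 4$ step is redundant is also accurate, since none of the cited type-analysis results assumes $n \ge 5$.
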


\begin{proof}
For $n \le 4$, any subgroup of $\Sym(P)$ embeds into $S_n$, and a copy of $G$ inside $\Aut(P) \le \Sym(P)$ would embed $G$ into $S_n$, impossible by Lemma~\ref{lem:S4-embed}. So assume $5 \le n \le 13$ and, for contradiction, $\Aut(P) \cong G$. Fixing an isomorphism realizes $G$ as a subgroup of $\Aut(P) \subseteq \Sym(P)$ of order $8$, acting faithfully on $P$ by order-automorphisms. By the orbit--stabilizer theorem every orbit has size in $\{1, 2, 4, 8\}$, so the action is of one of Types~A--D.

If Type~D, then $\rho^2$ acts trivially on $P$ (Lemma~\ref{lem:typeD}), contradicting faithfulness. If Type~A with $P$ a single $8$-orbit, Lemma~\ref{lem:pure-8orbit} gives $\Aut(P) \cong S_8$, and $|S_8| \ne 8$ contradicts $\Aut(P) \cong G$ directly. In each of the remaining cases, namely Type~A with an $8$-orbit and a further orbit (Lemma~\ref{lem:8orbit}), Type~B (Lemma~\ref{lem:hiddeninv}), and Type~C (Theorem~\ref{thm:typeC-main}), the cited result produces $\phi \in \Aut(P) \setminus G$, whence $|\Aut(P)| \ge 9 > 8 = |G|$, again contradicting $\Aut(P) \cong G$. In every case we reach a contradiction, so no such $P$ exists.
\end{proof}

\section{The construction and the main theorem}\label{sec:construction}

\begin{definition}\label{def:p14}
Let $P_{14} = \{a_0, a_1\} \cup \{(i, j) : i \in \mathbb{Z}_4,\ j \in \{0, 1, 2\}\}$, equipped with the partial order generated by the following relations (that is, the reflexive-transitive closure of (R1)--(R3)):
\begin{itemize}
\item[(R1)] $(i, 0) < (i, 1)$ and $(i, 1) < (i, 2)$ for $i \in \mathbb{Z}_4$;
\item[(R2)] $(i, 0) < (i+1, 2)$ for $i \in \mathbb{Z}_4$ (indices mod $4$);
\item[(R3)] $a_k < (i, 0)$ for $k \in \{0, 1\}$, $i \in \mathbb{Z}_4$.
\end{itemize}
Lemma~\ref{lem:p14-covers} below verifies that (R1)--(R3) are precisely the covering relations of $P_{14}$, so that Figure~\ref{fig:hasse} is its Hasse diagram.
\end{definition}

\begin{remark}\label{rem:ordinalsum}
$P_{14}$ is precisely the ordinal sum of the $2$-element antichain $\{a_0, a_1\}$ with the $12$-point poset $\mathbb{Z}_4 \times \{0,1,2\}$, $(i,2) > (i,1) > (i,0) < (i+1,2)$, realizing $\mathbb{Z}_4$ (cf.\ \cite[proof of Prop.~2.3]{BarmakBarreto2024} and \cite{Frucht1950}). We nevertheless determine $\Aut(P_{14})$ directly, to keep the paper self-contained.
\end{remark}

\begin{lemma}\label{lem:p14-covers}
The strict order relations of $P_{14}$ are exactly the following:
\begin{enumerate}
\item[(i)] $a_k < (i, j)$ for all $k \in \{0,1\}$, $i \in \mathbb{Z}_4$, $j \in \{0,1,2\}$;
\item[(ii)] $(i,0) < (i,1)$, $(i,1) < (i,2)$, $(i,0) < (i,2)$, and $(i,0) < (i+1,2)$ for all $i \in \mathbb{Z}_4$.
\end{enumerate}
Writing $U_x$ (resp.\ $D_x$) for the set of elements strictly above (resp.\ below) $x$, it follows that for each $i \in \mathbb{Z}_4$,
\[
U_{(i,0)} = \{(i,1), (i,2), (i+1,2)\}, \qquad U_{(i,1)} = \{(i,2)\}, \qquad U_{(i,2)} = \emptyset,
\]
\[
\begin{gathered}
D_{(i,0)} = \{a_0, a_1\}, \qquad D_{(i,1)} = \{(i,0), a_0, a_1\}, \\
D_{(i,2)} = \{(i,1), (i,0), (i-1,0), a_0, a_1\}.
\end{gathered}
\]
Moreover, the covering relations of $P_{14}$ are precisely (R1)--(R3): no relation in the list (R1)--(R3) is implied by transitivity from the others, and no unlisted cover arises.
\end{lemma}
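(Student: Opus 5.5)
I will compute the transitive closure of (R1)--(R3) directly, then read off the covering relations by testing each generating relation for an intermediate element.

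\emph{Step 1 (the listed relations hold).} Every relation in (i)--(ii) is forced. The relations $a_k<(i,0)$, $(i,0)<(i,1)<(i,2)$ and $(i,0)<(i+1,2)$ are generators. The relation $(i,0)<(i,2)$ follows by transitivity through $(i,1)$. Finally, $a_k<(i,1)$ and $a_k<(i,2)$ follow by composing $a_k<(i,0)$ with the chain in column $i$.

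\emph{Step 2 (nothing else holds).} Let $R$ be the set of pairs listed in (i)--(ii). I will check that $R$ is transitively closed and irreflexive; it is then the strict order generated by (R1)--(R3), since it contains the generators. Irreflexivity is clear. For transitivity, I split on the middle element $y$ of a composable pair $x<y<z$ in $R$. Since $U_{(i,2)}=\emptyset$, we may take $y\in\{(i,0),(i,1)\}$ or $y=a_k$. Nothing lies below $a_k$, so $y=a_k$ never occurs as a middle element.
\begin{itemize}
\item If $y=(i,0)$, then $x=a_k$, and $a_k$ is below everything outside $\{a_0,a_1\}$.
\item If $y=(i,1)$, then $x\in\{(i,0),a_k\}$ and $z=(i,2)$, and both $(i,0)<(i,2)$ and $a_k<(i,2)$ lie in $R$.
\end{itemize}
This yields the displayed sets $U_x$ and $D_x$ by inspection. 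In particular $D_{(i,2)}$ collects $(i,1)$, $(i,0)$, the element $(i-1,0)$ coming from (R2) shifted by one, and $a_0,a_1$.

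\emph{Step 3 (covers).} A relation $x<z$ in $R$ is a cover iff $U_x\cap D_z=\emptyset$. I apply this test to each pair.
\begin{itemize}
\item For $a_k<(i,0)$ the test holds, since $D_{(i,0)}=\{a_0,a_1\}$ contains no element above $a_k$.
\item For $(i,0)<(i,1)$ it holds, since $D_{(i,1)}\cap U_{(i,0)}=\emptyset$.
\item For $(i,1)<(i,2)$ it holds, since $U_{(i,1)}=\{(i,2)\}$.
\item For $(i,0)<(i+1,2)$ it holds, since $U_{(i,0)}\cap D_{(i+1,2)}=\{(i,1),(i,2),(i+1,2)\}\cap\{(i+1,1),(i+1,0),(i,0),a_0,a_1\}=\emptyset$.
\end{itemize}
The unlisted relations fail the test: $(i,0)<(i,2)$ factors through $(i,1)$, and $a_k<(i,1)$ and $a_k<(i,2)$ factor through $(i,0)$. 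Hence the covers are exactly (R1)--(R3). Equivalently, no generator is implied by the others, because a non-cover is exactly a relation factoring through an intermediate element.

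The only real obstacle is index bookkeeping mod $4$ in the (R2) cover check. There one must confirm that $(i+1,2)$ and $(i,1)$ are distinct from the elements of $D_{(i+1,2)}$. This holds because columns $i$ and $i+1$ differ and levels differ, which needs $4>1$; for $\mathbb{Z}_2$ it would still be fine, but the inequality $i+1\ne i$ in $\mathbb{Z}_4$ is the point used.
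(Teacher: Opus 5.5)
Your proposal is correct and follows essentially the same route as the paper. Both define $R$, show the generators imply $R$, check that $R$ is irreflexive and transitively closed, and then identify covers by empty open intervals together with explicit intermediates for the non-generators. The only cosmetic difference is that your transitivity check splits on the middle element of a composable pair, whereas the paper splits on its first element.
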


\begin{proof}
Let $R$ denote the set of strict relations listed in (i)--(ii). Every relation in $R$ follows from (R1)--(R3) by transitivity: the relations in $R$ not already in (R1)--(R3) are $(i,0) < (i,2)$ (via $(i,0) < (i,1) < (i,2)$) and $a_k < (i,1)$, $a_k < (i,2)$ (via $a_k < (i,0)$ and (ii)). Conversely, $R$ is irreflexive and transitive, hence contains the transitive closure of (R1)--(R3): a composable pair of relations in $R$ either starts at some $a_k$, in which case the composite is again of type (i), or starts at some $(i,0)$, in which case it ends at $(i,2)$ or $(i+1,2)$, both already in (ii). Indeed, relations ending at level $j = 2$ cannot be extended, and the relation $(i,0) < (i,1)$ extends only to $(i,0) < (i,2)$. Hence $R$ is exactly the strict order of $P_{14}$, and the displayed up-sets and down-sets follow by inspection of the list.

For the covering relations, each relation in (R1)--(R3) has empty open interval, by the up- and down-sets above:
\[
U_{(i,0)} \cap D_{(i,1)} = \emptyset, \qquad
U_{(i,1)} \cap D_{(i,2)} = \emptyset,
\]
\[
U_{(i,0)} \cap D_{(i+1,2)} = \{(i,1), (i,2), (i+1,2)\} \cap \{(i+1,1), (i+1,0), (i,0), a_0, a_1\} = \emptyset,
\]
and $D_{(i,0)} = \{a_0, a_1\}$ with $a_0, a_1$ incomparable, so $a_k \prec (i,0)$. Conversely, the strict relations of $R$ that are not listed in (R1)--(R3) are not covers: $(i,0) < (i,1) < (i,2)$, $a_k < (i,0) < (i,1)$, $a_k < (i,0) < (i,2)$, and $a_k < (i,0) < (i+1,2)$ exhibit intermediate elements. Hence the covers of $P_{14}$ are exactly (R1)--(R3).
\end{proof}

\begin{figure}[h]
\centering
\begin{tikzpicture}[scale=0.9, every node/.style={inner sep=1pt, font=\small}]
\foreach \i in {0,1,2,3} {
  \node (a\i 0) at (\i*1.8, 0) {$(\i,0)$};
  \node (a\i 1) at (\i*1.8, 1.2) {$(\i,1)$};
  \node (a\i 2) at (\i*1.8, 2.4) {$(\i,2)$};
  \draw (a\i 0) -- (a\i 1);
  \draw (a\i 1) -- (a\i 2);
}
\draw (a00) -- (a12);
\draw (a10) -- (a22);
\draw (a20) -- (a32);
\draw (a30) .. controls (7.2,0.9) and (7.2,3.6) .. (2.7,3.8)
            .. controls (-1.8,4.0) and (-1.8,2.9) .. (a02);
\node (a0) at (0.9, -1.5) {$a_0$};
\node (a1) at (4.5, -1.5) {$a_1$};
\foreach \i in {0,1,2,3} {
  \draw (a0) -- (a\i 0);
  \draw (a1) -- (a\i 0);
}
\end{tikzpicture}
\caption{The Hasse diagram of $P_{14}$. The poset is invariant under the shift $\rho \colon (i,j) \mapsto (i+1,j)$ (first coordinate mod $4$) and under the exchange $a_0 \leftrightarrow a_1$; the wrap-around covering relation $(3,0) \prec (0,2)$ is routed around the outside of the diagram.}
\label{fig:hasse}
\end{figure}

\begin{theorem}\label{thm:aut-p14}
$\Aut(P_{14}) \cong \mathbb{Z}_2 \times \mathbb{Z}_4$.
\end{theorem}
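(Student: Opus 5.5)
The plan is to exhibit the obvious subgroup and then show that every automorphism lies in it. The maps $\rho\colon (i,j)\mapsto(i+1,j)$ (fixing $a_0,a_1$) and $\sigma\colon a_0\leftrightarrow a_1$ (fixing all $(i,j)$) preserve the relation list of Lemma~\ref{lem:p14-covers}, since that list is invariant under index shift and is symmetric in $k$. They commute and have orders $4$ and $2$, and $\langle\rho\rangle\cap\langle\sigma\rangle$ is trivial because $\sigma$ moves $a_0$ while no nonzero power of $\rho$ does. Hence they generate a subgroup isomorphic to $\mathbb{Z}_2\times\mathbb{Z}_4$, and it remains to show $|\Aut(P_{14})|\le 8$.

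Let $\phi\in\Aut(P_{14})$. By the height function, $\phi$ preserves the levels $h^{-1}(0)=\{a_0,a_1\}$ and $h^{-1}(1)=\{(i,0)\}$. It also preserves $h^{-1}(2)$ and $h^{-1}(3)$, but these are mixed. From the down-sets in Lemma~\ref{lem:p14-covers}, $h((i,1))=2$ and $h((i,2))=3$, because $(i,2)$ sits on the chain $a_k<(i,0)<(i,1)<(i,2)$. So the levels are in fact $\{a_k\}$, $\{(i,0)\}$, $\{(i,1)\}$, $\{(i,2)\}$, and each is preserved. Composing with a power of $\sigma$, we may assume $\phi$ fixes $a_0$ and $a_1$. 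Write $\phi((i,0))=(\pi(i),0)$ with $\pi\in\Sym(\mathbb{Z}_4)$.

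The rigidity step comes next. Each $(i,1)$ has exactly one element below it at level $1$, namely $(i,0)$, so $\phi((i,1))=(\pi(i),1)$. Each $(i,2)$ covers exactly one level-$2$ element, $(i,1)$, which forces $\phi((i,2))=(\pi(i),2)$. The remaining cover $(i,0)\prec(i+1,2)$ must map to a cover, which gives $(\pi(i),0)\prec(\pi(i+1),2)$. Since the only cover of this shape is $(\pi(i),0)\prec(\pi(i)+1,2)$, we get $\pi(i+1)=\pi(i)+1$ for all $i$. Thus $\pi$ is a translation and $\phi$ is a power of $\rho$. Consequently every automorphism lies in $\langle\rho,\sigma\rangle$, and $\Aut(P_{14})\cong\mathbb{Z}_2\times\mathbb{Z}_4$.

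The main obstacle is computing the height levels correctly, in particular confirming that $(i,1)$ and $(i,2)$ fall on different levels. That separation lets us identify $\phi$ on each column directly from the up- and down-sets listed in Lemma~\ref{lem:p14-covers}, and after it the argument is routine.
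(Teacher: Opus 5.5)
Your proof is correct and follows essentially the paper's route: you fix the height levels, use column-by-column rigidity of covers, and then let the diagonal cover $(i,0)\prec(i+1,2)$ force $\pi$ to be a translation, with only cosmetic differences (you read off $\phi$ on levels $2$ and $3$ via down-sets rather than up-covers, and you normalize by a power of $\sigma$ instead of counting pairs $(\pi(0),\varepsilon)$). Do delete the stray phrase that $h^{-1}(2)$ and $h^{-1}(3)$ are ``mixed'', which your very next sentence correctly refutes.
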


\begin{proof}
\emph{Step 1.} Define $\rho, \sigma \in \Sym(P_{14})$ by
\[
\rho(a_k) = a_k,\ \rho((i, j)) = (i+1, j); \qquad \sigma(a_k) = a_{k+1 \bmod 2},\ \sigma((i, j)) = (i, j).
\]
Each of $\rho$ and $\sigma$ maps the set of generating relations (R1)--(R3) bijectively onto itself ($\rho$ shifts the index $i$ in each of (R1)--(R3), and $\sigma$ permutes only the relations (R3), exchanging $a_0$ and $a_1$), and the same holds for their inverses $\rho^3$ and $\sigma$. Since the order of $P_{14}$ is the reflexive-transitive closure of (R1)--(R3), both maps and their inverses are order-preserving, i.e., $\rho, \sigma \in \Aut(P_{14})$. One checks $\rho^4 = \mathrm{id}$, $\sigma^2 = \mathrm{id}$, $\rho\sigma = \sigma\rho$; that $\rho$ has order exactly $4$ (it cycles the first coordinate); and that $\sigma \notin \langle \rho \rangle$ (since $\rho$ fixes $a_0, a_1$ while $\sigma$ exchanges them). Hence $\langle \rho \rangle \cap \langle \sigma \rangle = \{\mathrm{id}\}$ and $\langle \rho, \sigma \rangle$ has order $8$; being generated by two commuting elements of orders $4$ and $2$ with trivial intersection, $\langle \rho, \sigma \rangle \cong \mathbb{Z}_2 \times \mathbb{Z}_4$ is a subgroup of $\Aut(P_{14})$.

\emph{Step 2: Heights.} By Lemma~\ref{lem:p14-covers},
\[
h(a_k) = 0,\quad h((i, 0)) = 1,\quad h((i, 1)) = 2,\quad h((i, 2)) = 3,
\]
the last realized by the chain $a_k \prec (i, 0) \prec (i, 1) \prec (i, 2)$ (the alternative chain $a_k \prec (i-1, 0) \prec (i, 2)$ has length $2$). As noted in Section~\ref{sec:prelim}, every $\phi \in \Aut(P_{14})$ preserves $h$, hence preserves each level $h^{-1}(\ell)$ setwise.

\emph{Step 3.} Let $\phi \in \Aut(P_{14})$. By Step~2, $\phi$ permutes each level; define $\pi, \pi', \pi'' \in \Sym(\mathbb{Z}_4)$ and $\varepsilon \in \mathbb{Z}_2$ by
\[
\phi(a_k) = a_{k + \varepsilon},\ \phi((i, 0)) = (\pi(i), 0),\ \phi((i, 1)) = (\pi'(i), 1),\ \phi((i, 2)) = (\pi''(i), 2).
\]
We use throughout that an order-isomorphism of a finite poset maps covering relations onto covering relations, and that Lemma~\ref{lem:p14-covers} identifies the covers of $P_{14}$.

(i) $\pi' = \pi$: from $(i,0) \prec (i,1)$, applying $\phi$ gives $(\pi(i),0) \prec (\pi'(i),1)$. By Lemma~\ref{lem:p14-covers}, the covers of $(\pi(i),0)$ are $(\pi(i),1)$ and $(\pi(i)+1,2)$, of heights $2$ and $3$ respectively, so the unique height-$2$ cover of $(\pi(i),0)$ is $(\pi(i),1)$, giving $\pi'(i) = \pi(i)$.

(ii) $\pi'' = \pi$: from $(i,1) \prec (i,2)$, applying $\phi$ gives $(\pi(i),1) \prec (\pi''(i),2)$; by Lemma~\ref{lem:p14-covers}, the only cover of $(\pi(i),1)$ is $(\pi(i),2)$, so $\pi''(i) = \pi(i)$.

(iii) $\pi(i+1) = \pi(i) + 1$: the relation $(i,0) \prec (i+1,2)$ gives $(\pi(i),0) \prec (\pi(i+1),2)$ (using $\pi'' = \pi$). By Lemma~\ref{lem:p14-covers}, the covers of $(b,0)$ are $(b,1)$ and $(b+1,2)$, so the unique height-$3$ cover of $(b,0)$ is $(b+1,2)$. Hence $(\pi(i+1),2) = (\pi(i)+1,2)$, giving $\pi(i+1) = \pi(i)+1$.

(iv) Iterating (iii), $\pi(i) = \pi(0) + i$: $\pi$ is the translation by $\pi(0) \in \mathbb{Z}_4$.

\emph{Step 4: Counting.} By Step~3 each $\phi$ is determined by $(\pi(0), \varepsilon) \in \mathbb{Z}_4 \times \mathbb{Z}_2$: $\pi(0)$ governs all three levels $h^{-1}(1), h^{-1}(2), h^{-1}(3)$ (as $\pi'' = \pi' = \pi$ and $\pi(i) = \pi(0)+i$), and $\varepsilon$ governs $h^{-1}(0) = \{a_0, a_1\}$. The assignment $\phi \mapsto (\pi(0), \varepsilon)$ is injective, so $|\Aut(P_{14})| \le 8$. With Step~1, $\Aut(P_{14}) = \langle \rho, \sigma \rangle \cong \mathbb{Z}_2 \times \mathbb{Z}_4$.
\end{proof}

\begin{theorem}\label{thm:main}
The minimal size of a poset whose automorphism group is isomorphic to $\mathbb{Z}_2 \times \mathbb{Z}_4$ is $14$, i.e.,
\[
\beta(\mathbb{Z}_2 \times \mathbb{Z}_4) = 14.
\]
\end{theorem}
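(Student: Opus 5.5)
The argument combines the two halves already established, matching an upper and a lower bound on $\beta(\mathbb{Z}_2 \times \mathbb{Z}_4)$.

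For the upper bound $\beta(\mathbb{Z}_2\times\mathbb{Z}_4)\le 14$ there are two independent routes, and either suffices. The first is Theorem~\ref{thm:aut-p14}: the explicit $14$-element poset $P_{14}$ of Definition~\ref{def:p14} has $\Aut(P_{14})\cong\mathbb{Z}_2\times\mathbb{Z}_4$, so the set in the definition of $\beta$ contains $14$. The second is Proposition~\ref{prop:product-bound} combined with the cyclic values $\beta(\mathbb{Z}_2)=2$ and $\beta(\mathbb{Z}_4)=12$ from Proposition~\ref{prop:cyclic}. I will cite Theorem~\ref{thm:aut-p14} as the primary witness, since it is self-contained, and mention the product-bound route in passing.

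For the lower bound $\beta(\mathbb{Z}_2\times\mathbb{Z}_4)\ge 14$ I invoke Theorem~\ref{thm:lower}: for every $n\le 13$, no $n$-element poset has automorphism group isomorphic to $G=\mathbb{Z}_2\times\mathbb{Z}_4$. Under the standing non-emptiness convention, $\beta$ is a minimum over posets with $|P|\ge 1$. Every candidate with $|P|\le 13$ is therefore excluded, so the minimum is at least $14$. Combining the two inequalities gives $\beta(\mathbb{Z}_2\times\mathbb{Z}_4)=14$.

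No step here is a genuine obstacle; all the substance lies in Theorems~\ref{thm:lower} and~\ref{thm:aut-p14}. The one point needing care is that Theorem~\ref{thm:lower} covers the full range $1\le n\le 13$. The small cases $n\le 4$ are handled inside its proof by Lemma~\ref{lem:S4-embed}, and the cases $5\le n\le 13$ by the Type~A--D analysis. So no separate case is left over, and in particular the empty poset is excluded by convention.
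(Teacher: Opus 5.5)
Your proposal is correct and matches the paper's proof. The paper likewise takes $P_{14}$ via Theorem~\ref{thm:aut-p14} for the upper bound, notes the product-bound route through Propositions~\ref{prop:product-bound} and~\ref{prop:cyclic} as an alternative, and obtains the lower bound directly from Theorem~\ref{thm:lower}.
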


\begin{proof}
For the upper bound, Theorem~\ref{thm:aut-p14} provides a $14$-element poset with automorphism group isomorphic to $G$, so $\beta(G) \le 14$ (this also follows from Propositions~\ref{prop:product-bound} and~\ref{prop:cyclic}, in view of Remark~\ref{rem:ordinalsum}). For the lower bound, Theorem~\ref{thm:lower} shows no poset on at most $13$ elements has automorphism group isomorphic to $G$, so $\beta(G) \ge 14$. Combining, $\beta(G) = 14$.
\end{proof}

\begin{remark}\label{rem:nonunique}
We make no claim that $P_{14}$ is the unique $14$-element poset realizing $\mathbb{Z}_2 \times \mathbb{Z}_4$. Its order-dual (obtained by reversing all relations) is also a realizer, since the automorphism group is invariant under order-duality; and it is genuinely distinct, as $P_{14}$ has two minimal and four maximal elements whereas its dual has four minimal and two maximal. Only the cardinality $\beta(\mathbb{Z}_2 \times \mathbb{Z}_4) = 14$ is canonical.
\end{remark}

\section{Concluding remarks and open problems}\label{sec:remarks}

The methods developed here (the decomposition into orbit types A--D, the analysis of $4$-orbits via their kernels, and the parity-and-adjacency constraints for cyclic configurations) are closely adapted to the subgroup lattice of $\mathbb{Z}_2 \times \mathbb{Z}_4$: the proof exploits that $G$ has exactly three subgroups each of orders $2$ and $4$, with the intersection properties~\hyperref[K1]{(K1)} and~\hyperref[K2]{(K2)}. Extending the analysis to $\mathbb{Z}_2 \times \mathbb{Z}_{2^k}$ for $k \ge 3$ would require controlling interactions between orbits of several sizes $2^j$ simultaneously, and lies beyond the scope of this work.

Several natural problems remain open. First, for the two non-abelian groups of order $8$, the dihedral group $D_4$ and the quaternion group $Q_8$, the exact value of $\beta$ is unknown to us; the orbit-type strategy of Sections~\ref{sec:smalltypes}--\ref{sec:typeC} may serve as a template, but the non-abelian setting loses the constancy of stabilizers along orbits (Section~\ref{subsec:fourorbits}) on which much of the present analysis rests. Second, it would be interesting to determine for which groups the product bound of Proposition~\ref{prop:product-bound} is attained: it can fail already for cyclic factors, since $\beta(\mathbb{Z}_{12}) = \beta(\mathbb{Z}_3) + \beta(\mathbb{Z}_4) - 1$ by~\cite{BarmakBarreto2024}, whereas Theorem~\ref{thm:intro-main} and Proposition~\ref{prop:elementary} show that it is attained for $\mathbb{Z}_2 \times \mathbb{Z}_4$, $\mathbb{Z}_2 \times \mathbb{Z}_2$, and $\mathbb{Z}_2^3$. Finally, the determination of $\beta(G)$ for all finite abelian groups $G$, posed implicitly by~\cite{Barreto2021, BarmakBarreto2024}, remains open even for abelian $2$-groups.

\section*{Declarations}

\noindent\textbf{Funding.} No funding was received to assist with the preparation of this manuscript.

\smallskip
\noindent\textbf{Competing interests.} The authors have no competing interests to declare that are relevant to the content of this article.

\smallskip
\noindent\textbf{Data availability.} Data sharing is not applicable to this article, as no datasets were generated or analysed during the current study.


\begin{thebibliography}{10}

\bibitem{Babai1980}
L.~Babai, \emph{Finite digraphs with given regular automorphism groups}, Period. Math. Hungar. \textbf{11} (1980), no.~4, 257--270.

\bibitem{Barmak2023}
J.~A. Barmak, \emph{Small posets with prescribed automorphism group}, Period. Math. Hungar. \textbf{86} (2023), no.~1, 210--216.

\bibitem{BarmakBarreto2024}
J.~A. Barmak and A.~N. Barreto, \emph{Smallest posets with given cyclic automorphism group}, Algebr. Comb. \textbf{7} (2024), no.~5, 1307--1318.

\bibitem{BarmakMinian2009}
J.~A. Barmak and E.~G. Minian, \emph{Automorphism groups of finite posets}, Discrete Math. \textbf{309} (2009), no.~10, 3424--3426.

\bibitem{Barreto2021}
A.~N. Barreto, \emph{Sobre los posets m\'as chicos con grupo de automorfismos abeliano dado}, Tesis de Licenciatura, Universidad de Buenos Aires, 2021.

\bibitem{Birkhoff1946}
G.~Birkhoff, \emph{Sobre los grupos de automorfismos}, Rev. Un. Mat. Argentina \textbf{11} (1946), 155--157.

\bibitem{Frucht1939}
R.~Frucht, \emph{Herstellung von Graphen mit vorgegebener abstrakter Gruppe}, Compositio Math. \textbf{6} (1939), 239--250.

\bibitem{Frucht1950}
R.~Frucht, \emph{On the construction of partially ordered systems with a given group of automorphisms}, Amer. J. Math. \textbf{72} (1950), 195--199.

\bibitem{GyenizseHajnalZadori2024}
G.~Gyenizse, P.~Hajnal, and L.~Z\'adori, \emph{Representations of finite groups by posets of small height}, Order \textbf{41} (2024), 593--611.

\bibitem{Thornton1972}
M.~C. Thornton, \emph{Spaces with given homeomorphism groups}, Proc. Amer. Math. Soc. \textbf{33} (1972), 127--131.

\end{thebibliography}
\end{document}